\documentclass[12pt]{elsarticle}
\topmargin=-1.5cm \textwidth=16.3cm \textheight=23.5cm
\oddsidemargin=0cm

\usepackage{amsfonts}
\usepackage{amsmath,cancel}
\usepackage{amssymb}
\usepackage{url}
\usepackage[usenames]{color}
\usepackage{graphicx}
\usepackage{mathrsfs}
\usepackage[normalem]{ulem}
\setcounter{MaxMatrixCols}{10}
\pagestyle{empty}
\newtheorem{problem}{Problem}
\newtheorem{theorem}{Theorem}[section]

\newtheorem{teo}[theorem]{Theorem}

\newtheorem{lema}[theorem]{Lemma}

\newtheorem{cor}[theorem]{Corollary}

\newtheorem{definition}[theorem]{Definition}
\newtheorem{example}[theorem]{Example}
\newtheorem{lemma}[theorem]{Lemma}

\newtheorem{prop}[theorem]{Proposition}

\newenvironment{proof}[1][\noindent Proof]{\textbf{#1.} }{\ \rule{0.5em}{0.5em}}

\newcommand{\R}{\mathbb{R}}
\newcommand{\M}{\mathcal{M}}

\title{Complementarity spectrum of digraphs}

\author{Diego Bravo}
\ead{dbravo@fing.edu.uy }
\author{Florencia Cubr\'{\i}a\footnote{Corresponding author: fcubria@fing.edu.uy. \\ Published in \textit{Linear Algebra and its Applications}, Vol. 627, 2021, Pages 24-40.\\ \url{https://doi.org/10.1016/j.laa.2021.06.001}}}
\ead{fcubria@fing.edu.uy}
\author{Marcelo Fiori}
\ead{mfiori@fing.edu.uy}
\address{Instituto de Matemática y Estadística ``Rafael Laguardia'', Facultad de Ingenier\'{i}a,\\ Universidad de la Republica, Uruguay.}

\author{Vilmar Trevisan}
\ead{mfiori@fing.edu.uy}
\address{UFRGS – Universidade Federal do Rio Grande do Sul,
Instituto de Matemática, Porto Alegre, Brazil, and
Dipartimento di Matematica e Applicazioni, University of Naples Federico
II, Italy}

\date{June 7, 2021}
\begin{document}

\begin{abstract}
In this paper we study the complementarity spectrum of digraphs, with special attention to the problem of digraph characterization through this complementarity spectrum. That is, whether two non-isomorphic digraphs with the same number of vertices can have the same complementarity eigenvalues. The complementarity eigenvalues of matrices, also called Pareto eigenvalues, has led to the study of the complementarity spectrum of (undirected) graphs and, in particular, the characterization of undirected graphs through these eigenvalues is an open problem. We characterize the digraphs with one and two complementarity eigenvalues, and we give examples of non-isomorphic digraphs with the same complementarity spectrum.
\end{abstract}
\begin{keyword} Complementarity spectrum \sep complementary spectrum \sep%
    digraph characterization\\ Mathematics Subject Classification: 05C50, 05C20
\end{keyword}
\maketitle
\section{Introduction}

The concept of complementarity eigenvalues for matrices was introduced by Seeger in \cite{Seeger99} and has found many applications in different fields of science. The complementarity spectrum of a graph $G$ has been defined as the complementarity spectrum of the adjacency matrix of $G$ by Fernandes et. al in \cite{Fernandes2017}, while Seeger \cite{Seeger2018} has suggested representing an undirected graph by its complementarity spectrum. The main purpose of this paper is to introduce the concept of complementarity spectrum of digraphs. This will be naturally defined as the complementarity spectrum of the adjacency matrix of the digraph.

The main outcome of the present manuscript is the interpretation of the complementarity eigenvalues of a digraph in terms of its structural properties. In order to explain the results we obtain, we briefly review some known facts about (di)graph determination.

{It is well known that the Graph Isomorphism Problem is computationally hard and hence, because computing the spectra of (di)graphs can be done in polynomial time, using spectral parameters as tools to characterize (di)graphs, is a field of study that is increasing in mathematical sciences. Already in 1957, Collatz and Sinogowitz \cite{Collatz1957} have shown that there are  non isomorphic cospectral graphs. This means that graphs are not determined by their spectra - not DS for short. In fact that are several classes of examples that may be found, for example, in \cite{Cvetkovic1998}, which include trees, graphs of arbitrary size, and digraphs \cite{Harary1971}. An active research problem is to determine whether most graph are DS \cite{Haemers}, meaning that the proportion of graphs having a cospectral mate among all graphs of the same order $n$ (number of vertices) is arbitrarily small when $n$ goes to infinity.}

{Adapting the definition given in \cite{Pinheiro2020} for graphs, we say that a strongly connected digraph is \textit{determined by its complementarity spectrum} -- DCS for short --  if any complementarity cospectral digraph is either isomorphic, or has a different order.}

{It is worth pointing out that no examples of connected non isomorphic graphs, with the same order, having the same complementarity spectrum have being found so far \cite{Pinheiro2020}. This is a strong evidence that the complementarity spectrum distinguishes graphs in a better way than other any usual spectra. A hard problem is to determine whether all graphs are DCS. In a sharp contrast, we find here (see Section \ref{sec:contraejemplos}) families of non isomorphic digraphs with equal order that are complementarity cospectral.}

{Thus one of the main results of this note is that not all digraphs are DCS. With respect to the problem of digraph characterization, this result implies that the complementarity spectrum is not a better tool than, say, any other spectrum. On the other hand, the complementarity spectrum of digraphs determines some interesting structural properties.}

{As an example, we show that the complementarity eigenvalues are spectral radii of strongly connected subdigraphs. Additionally, we characterize the digraphs having one or two complementarity eigenvalues, showing that there is a strong connection between the cyclic structure of the digraph and the cardinality of its complementarity spectrum (see Section \ref{sec:characterization} for details).}

{We emphasize that the goal of this note is to present the concept of complementarity spectrum of digraphs and a long term purpose of our research is understand in general how this spectra may reveal structural properties of digraphs. Since this is too broad for a unique paper, we focus here on the problem of digraph characterization and develop some fundamental theory about the complementarity spectra of digraphs. In particular, we anticipate that the cardinality of the complementarity spectra is key for understanding what the complementarity spectra of digraphs may reveal of its structure. We refer to our Section \ref{sec:final} for some open problems.}

The remaining  of the paper is organized as follows.
In Section \ref{sec:prelim} we recall some definitions, known results, and we set the notation used throughout the paper.
In Section \ref{sec:matrices} we recall the definition of complementarity eigenvalues for matrices, and we prove some useful results for the setting of this paper.
In Section \ref{sec:digraphs} we introduce the concept of complementarity spectrum for digraphs, and present some results regarding it. The main result in this section is the characterization of the complementarity spectrum of a digraph in terms of its structure. In Section \ref{sec:characterization} we completely characterize the digraphs with at most two complementarity eigenvalues.
Finally, in Section \ref{sec:contraejemplos} we study some specific families of digraphs with three complementarity eigenvalues. The main result in this section is the presentation of examples of non isomorphic digraphs of arbitrary order that are cospectral.

\section{Preliminaries}

The vector \textbf{o} will denote the null vector in $\R^n$, the vector \textbf{1} will denote the vector in $\R^n$ such that $\textbf{1}_i=1$ for all $i=1, \hdots, n$, and when comparing two vectors, $\leq$ and $<$ means that the inequality holds for every coordinate. Given a set $S$, its cardinality is denoted as $\#S$.

\label{sec:prelim}
Let $D=(V,E)$ be a finite simple digraph and vertices labelled as $1, \hdots, n$. The adjacency matrix of $D$ is defined as $A(D)=(a_{ij})$ where
\[a_{ij}=
\begin{cases}
1 \quad \text{if }(i,j)\in E,\\
0 \quad \text{otherwise.}
\end{cases}
\]

Let $p_D$ denote the characteristic polynomial of $A(D)$. The multiset of roots of $p_D$, counted with their multiplicities, is the spectrum of $D$.

We refer to the landmark paper by R. Brualdi \cite{BRUALDI2010} for a comprehensive analysis of spectra of digraphs.

{Throughout this paper, $\rho(\cdot)$ denotes the spectral radius (i.e., the largest module of the eigenvalues) of a matrix. For nonnegative irreducible matrices, it is well known that the spectral radius coincides with the largest eigenvalue, due to the powerful Perron-Frobenius Theorem. Additionally, the spectral radius is simple and may be associated with an eigenvector $x>\textbf{o}$. On the other hand, it is well known that any adjacency matrix $A(D)$ of a digraph $D$ is irreducible if and only if $D$ is strongly connected. Hence, we have that
$$\rho(A(D))= \max \{\rho(A(D_i))~ |~ D_i \mbox{~is a strongly connected component of ~} D  \}.$$
This real positive value $\rho(A(D))$ is called the \emph{spectral radius of the digraph $D$} and is denoted by $\rho(D)$. As we will see, the spectral radius of the digraph plays a fundamental role on the results we obtain in this note.}

A digraph $H=(V',E')$ is a subdigraph of $D$ (denoted $H\leq D$) if $V'\subset V$ and $E'\subset E$. We say that $H$ is an induced subdigraph if $E'=E \cap (V'\times V')$ and a proper subdigraph if $E'\neq E$. 

{The following is well known, but we state here for easy reference.
\begin{lemma}\label{lem:sub} Let $H$ be a proper subdigraph of a strongly connected digraph $D$. Then $\rho(H) < \rho(D)$.
\end{lemma}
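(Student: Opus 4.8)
The plan is to translate the statement into the language of nonnegative matrices and invoke the monotonicity of the spectral radius that comes from the Perron--Frobenius theorem. Write $n=\#V$ and let $B$ be the $n\times n$ nonnegative matrix obtained from $A(H)$ by indexing its rows and columns by $V$ and placing zeros in every row and column corresponding to a vertex of $V\setminus V'$. Since $H\le D$ we have $0\le B\le A(D)$ entrywise, and padding with zero rows and columns only adds zeros to the spectrum, so $\rho(H)=\rho(A(H))=\rho(B)$. Moreover $B\ne A(D)$: if $V'\subsetneq V$ this is because $D$ is strongly connected, hence every vertex of $D$ has out-degree at least $1$, so some row of $A(D)$ indexed by $V\setminus V'$ is nonzero while the corresponding row of $B$ is zero; and if $V'=V$ then $E'\subsetneq E$ produces an entry where $A(D)$ is $1$ while $B$ is $0$. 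It therefore suffices to prove: if $A$ is an irreducible nonnegative matrix and $0\le B\le A$ entrywise with $B\ne A$, then $\rho(B)<\rho(A)$.

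For the inequality, set $\rho=\rho(A)$. Since $A$ is nonnegative and irreducible, so is $A^{\top}$, and Perron--Frobenius provides a vector $y>\textbf{o}$ with $A^{\top}y=\rho\,y$. Since $B$ is nonnegative, Perron--Frobenius for nonnegative matrices provides $z\ge\textbf{o}$, $z\ne\textbf{o}$, with $Bz=\rho(B)\,z$. Pairing these vectors gives $\rho(B)\,y^{\top}z=y^{\top}Bz\le y^{\top}Az=\rho\,y^{\top}z$, and because $y>\textbf{o}$ while $z\ge\textbf{o}$, $z\ne\textbf{o}$ force $y^{\top}z>0$, we conclude $\rho(B)\le\rho$.

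It remains to exclude equality, which is the only delicate point and the place where irreducibility is essential. Suppose $\rho(B)=\rho$. Then $y^{\top}(A-B)z=0$ with $A-B\ge 0$ entrywise, $y>\textbf{o}$ and $z\ge\textbf{o}$; reading this sum coordinatewise forces $(A-B)z=\textbf{o}$, hence $Az=Bz=\rho\,z$. Thus $z$ is a nonnegative eigenvector of the irreducible matrix $A$ for the eigenvalue $\rho$, so Perron--Frobenius yields $z>\textbf{o}$. But then $(A-B)z=\textbf{o}$ with $A-B\ge 0$ and $z>\textbf{o}$ forces $A-B=0$, contradicting $B\ne A$. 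Hence $\rho(B)<\rho$, which is the claim. The only real obstacle is precisely this equality analysis; the rest is bookkeeping to reduce the subdigraph statement to the matrix inequality.
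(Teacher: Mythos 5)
The paper does not actually prove Lemma~\ref{lem:sub}; it states it as ``well known'' and cites nothing, so there is no in-paper argument to compare against. Your proof is correct and is exactly the standard argument one would supply: reduce to the matrix statement that $0\le B\le A$ with $A$ irreducible and $B\ne A$ forces $\rho(B)<\rho(A)$, prove the weak inequality by pairing a left Perron vector of $A$ with a nonnegative eigenvector of $B$, and rule out equality by showing it would force $(A-B)z=\textbf{o}$, then $z>\textbf{o}$ by irreducibility, then $A=B$. The bookkeeping is also handled properly: you justify $B\ne A(D)$ in both cases of the paper's definition of ``proper'' (missing vertices, using that strong connectivity gives positive out-degrees, and missing arcs with $V'=V$), and the zero-padding step correctly preserves the spectral radius. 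The delicate point you flag --- strictness of the inequality, which is where irreducibility of $A(D)$ (i.e., strong connectivity of $D$) is genuinely used --- is treated rigorously, so the proof is complete.
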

}

We use the term \textit{cycle} to refer to a directed cycle in a digraph. A disjoint union of cycles will be called a \emph{linear digraph}.

The following result is central in the spectral theory of digraphs,  allowing one to compute the characteristic polynomial in terms of the structural properties of the digraph:

\begin{teo}[Sachs \cite{Sachs1964}]\label{tm:sach}
Let $P_D(x)=x^n+a_1x^{n-1}+\hdots +a_{n-1}x+a_n$ be the characteristic polynomial of $D$, then,
\[a_i=\sum_{L\in \mathscr{L}_i } (-1)^{p(L)},\]
where
\[\mathscr{L}_i=\{ \text{linear subdigraphs of $D$ with $i$  vertices}\},\]
\[p(L)= \# \{ \text{strongly connected components of } L\}.\]
\end{teo}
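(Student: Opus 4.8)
The plan is to expand $P_D(x)=\det(xI_n-A(D))$ by the Leibniz (permutation) formula and then to reorganize the resulting sum according to the cycle structure of the permutations that occur. Write $A=A(D)=(a_{ij})$; since $D$ is a simple digraph it has no loops, so $a_{ii}=0$ for every $i$, and hence the matrix $xI_n-A$ carries the variable $x$ on every diagonal entry and $-a_{ij}$ off the diagonal. Therefore
\[
P_D(x)=\sum_{\sigma\in S_n}\operatorname{sgn}(\sigma)\,x^{\,n-\#S_\sigma}\prod_{v\in S_\sigma}(-a_{v\,\sigma(v)}),
\]
where $S_n$ is the symmetric group and $S_\sigma=\{\,v:\sigma(v)\neq v\,\}$ is the set of non-fixed points of $\sigma$.

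Next I would identify which permutations contribute. A term indexed by $\sigma$ is nonzero exactly when $(v,\sigma(v))\in E$ for every $v\in S_\sigma$; equivalently, the arcs $(v,\sigma(v))$ with $v\in S_\sigma$ together with their endpoints form a subdigraph $L_\sigma$ of $D$. Decomposing $\sigma$ restricted to $S_\sigma$ into disjoint cycles shows that $L_\sigma$ is a vertex-disjoint union of directed cycles, i.e. a linear subdigraph on $\#S_\sigma$ vertices. Conversely, a linear subdigraph $L$ on a set of $i$ vertices determines a unique permutation of $\{1,\dots,n\}$ (follow the arcs of $L$ on its vertex set, fix everything else), whose non-fixed set has exactly $i$ elements. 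This yields a bijection between $\mathscr L_i$ and the set of permutations contributing to the coefficient of $x^{\,n-i}$ in $P_D(x)$; it is here that looplessness of $D$ matters, ensuring every cycle of $\sigma$ on $S_\sigma$ has length at least two and is a genuine directed cycle of $D$.

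It then remains to chase signs. Suppose $\sigma$ contributes, $\#S_\sigma=i$, and $\sigma$ restricted to $S_\sigma$ is a product of $c$ disjoint cycles. Since a $k$-cycle has sign $(-1)^{k-1}$ and the cycle lengths sum to $i$, we get $\operatorname{sgn}(\sigma)=(-1)^{\,i-c}$. On the other hand, for a surviving term $\prod_{v\in S_\sigma}(-a_{v\,\sigma(v)})=(-1)^{i}$, because each surviving factor $a_{v\,\sigma(v)}$ equals $1$. Multiplying, each such $\sigma$ contributes $(-1)^{i-c}(-1)^{i}=(-1)^{c}$ to the coefficient of $x^{\,n-i}$; and since $c=p(L_\sigma)$, summing over the bijection above gives $a_i=\sum_{L\in\mathscr L_i}(-1)^{p(L)}$.

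The proof is essentially routine determinant bookkeeping, so the only point that really deserves care is the sign computation: one must see that the factor $(-1)^{i}$ produced by the off-diagonal entries of $xI_n-A$ and the permutation sign $(-1)^{i-c}$ combine to leave precisely $(-1)^{p(L)}$, and one must confirm that $\sigma\mapsto L_\sigma$, restricted to the contributing permutations, is genuinely a bijection onto $\mathscr L_i$.
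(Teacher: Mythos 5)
The paper does not prove this statement: it is quoted as a classical result with a citation to Sachs, so there is no internal proof to compare against. Your Leibniz-expansion argument is the standard proof and is correct -- the separation of the diagonal into a clean factor $x^{\,n-\#S_\sigma}$ does rely on the absence of loops (which you note), the correspondence $\sigma\mapsto L_\sigma$ between contributing permutations with $\#S_\sigma=i$ and linear subdigraphs on $i$ vertices is indeed a bijection, and the sign bookkeeping $(-1)^{i-c}(-1)^{i}=(-1)^{c}=(-1)^{p(L_\sigma)}$ is right.
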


As an example, we apply this theorem in order to compute the characteristic polynomial and spectral radius of the cycle digraph $\vec{C}_n$, which will be useful in what follows.

\begin{example} The digraph $\vec{C}_n$ has a unique linear subdigraph: $\vec{C}_n$ itself. Then $a_i=0$ for all $i\neq n$ and $a_n=\displaystyle\sum_{L\in \mathscr{L}_n } (-1)^{p(L)}=(-1)^{p(\vec{C}_n)}=-1$. Hence, we have that
\[p_{\vec{C}_n}(x)=x^n-1,\]
whose zeros are the $n$-th roots of unity. So, $\rho(\vec{C}_n)=1$.
\end{example}

\section{Complementarity eigenvalues of matrices}
\label{sec:matrices}
The Eigenvalue Complementarity Problem (EiCP) introduced in \cite{Seeger99} has found many applications in different fields of science, engineering and economics \cite{Adly2015,Facchinei2007,Pinto2008,Pinto2004}.

Given a matrix $A \in \M_n(\mathbb{R})$, the set of complementarity eigenvalues is defined as those $\lambda \in \R$ such that there exist a vector $x \in \R^n$, not null and nonnegative, verifying
 $Ax \geq \lambda x$, and
\begin{equation*}
\label{complement}
\langle  x, Ax-\lambda x \rangle =0.
\end{equation*}

If we write $w=Ax-\lambda x\geq \textbf{o}$, the previous condition results in
\[x^tw=0 \]
which means to ask for
\[x_i=0 \quad \text{or} \quad w_i=0 \text{ for all } i=1 \hdots, n.\]

This last condition is called \textit{complementarity condition}.

The set of all complementarity eigenvalues of a matrix $A$ is called the \textit{complementarity spectrum} of $A$, and it is denoted $\Pi(A)$. Unlike the regular spectrum of a matrix, the complementarity spectrum is a set, and the number of complementarity eigenvalues is not determined by the size of the matrix.

It is known that if $\lambda$ is a complementarity eigenvalue of $A$, then it is a complementarity eigenvalue of $PAP^t$ as well, for every permutation matrix $P$ \cite{Pinto2008}.

This fact allows us to define the complementarity spectrum of a digraph, since the complementarity spectrum is invariant in the family of adjacency matrices associated to the digraph.

In what follows, we recall a series of results that will allow us to characterize the complementarity eigenvalues of a digraph in terms of its structural properties.

\begin{teo}\cite{Seeger99}
\label{submatriz}
Let $A \in \M_n(\R)$. If $\lambda$ is a complementarity eigenvalue of $A$ then $\lambda$ is an eigenvalue of some principal submatrix of $A$ with an associated eigenvector $x>\textbf{o}$.
The converse statement is true if the off-diagonal entries of A are non-negative.
\end{teo}

The following result is a reinterpretation of Theorem \ref{submatriz} in terms of the spectral radius of principal submatrices.

\begin{prop}
\label{prop:submatriz}
Let $A\in \mathcal{M}_n(\R)$ be a non-negative matrix, then
\begin{equation}
\label{obssubmatriz}
\Pi(A) =\{\rho(B):  B \text{ is a principal submatrix of $A$ irreducible or null}\}.
\end{equation}
\end{prop}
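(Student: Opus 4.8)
The plan is to prove the two inclusions in \eqref{obssubmatriz} separately, using Theorem \ref{submatriz} together with the Perron--Frobenius theory recalled in the preliminaries.

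First I would prove the inclusion $\Pi(A)\subseteq\{\rho(B)\}$. Let $\lambda\in\Pi(A)$. By Theorem \ref{submatriz}, $\lambda$ is an eigenvalue of some principal submatrix $C$ of $A$ with an associated eigenvector $x>\textbf{o}$. Since $A$ is nonnegative, so is $C$. Now I would pass to a suitable block decomposition of $C$: after a simultaneous permutation of rows and columns (which changes neither the spectrum nor the class of principal submatrices of $A$), $C$ is block upper triangular with irreducible or $1\times 1$ diagonal blocks $C_1,\dots,C_k$ (the blocks corresponding to the strongly connected components of the digraph on the index set of $C$, in a topological order). The key point is that a strictly positive eigenvector forces $\lambda$ to be the spectral radius of \emph{one} of these diagonal blocks. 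Concretely, writing $x=(x_1,\dots,x_k)$ conformally, the last block equation reads $C_k x_k=\lambda x_k$ with $x_k>\textbf{o}$; since $C_k$ is irreducible (or a nonnegative scalar, possibly zero) and has a positive eigenvector, Perron--Frobenius gives $\lambda=\rho(C_k)$. Moreover $C_k$ is itself a principal submatrix of $A$, and it is irreducible or (when it is a $1\times1$ zero block) null, so $\lambda$ lies in the right-hand set. (If $C_k=[0]$ then $\lambda=0$ and $B=[0]$ is the null principal submatrix; I should also note that $0\in\Pi(A)$ always, witnessed by any single zero diagonal entry, or, if $A$ has no zero diagonal entry, this case simply does not arise.)

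Next I would prove the reverse inclusion $\{\rho(B)\}\subseteq\Pi(A)$. Let $B$ be a principal submatrix of $A$ that is irreducible or null, and set $\lambda=\rho(B)$. If $B$ is null then $\lambda=0$ and $x=e_i$ for an index $i$ in the block gives $Ax=A e_i\ge\textbf{o}=\lambda x$ with complementarity holding trivially, so $0\in\Pi(A)$. If $B$ is irreducible, then by Perron--Frobenius $\lambda=\rho(B)$ is an eigenvalue of $B$ with an eigenvector $x_B>\textbf{o}$. Here I invoke the converse part of Theorem \ref{submatriz}: the off-diagonal entries of $A$ are nonnegative (indeed all entries of $A$ are, since $A\ge 0$), hence $\lambda$ being an eigenvalue of the principal submatrix $B$ with a positive eigenvector implies $\lambda\in\Pi(A)$. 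This direction is essentially immediate from the stated theorem.

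The main obstacle is the first inclusion, specifically the reduction of an arbitrary principal submatrix $C$ (which need not be irreducible) to one of its irreducible diagonal blocks while tracking that the positive eigenvector survives restriction to the last block. The subtlety is that a nonnegative reducible matrix can have eigenvalues strictly below its spectral radius, so one must genuinely use that the given eigenvector is \emph{strictly} positive and look at the terminal block in a topological ordering; I expect to spell out the Frobenius normal form argument carefully there, and to handle separately the degenerate case where the relevant block is a zero $1\times 1$ block so that $B$ is null rather than irreducible.
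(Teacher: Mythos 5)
Your proposal is correct and follows essentially the same route as the paper: both directions rest on Theorem \ref{submatriz}, with the forward inclusion handled by reducing a reducible principal submatrix to a terminal irreducible-or-null diagonal block (the paper uses a two-block decomposition with $Z$ irreducible or null, you use the full Frobenius normal form, which is the same idea) and applying Perron--Frobenius to conclude $\lambda=\rho$ of that block. The only cosmetic difference is in the reverse inclusion for the null case, where you verify the complementarity conditions directly with $x=e_i$ while the paper simply feeds the positive eigenvector $\textbf{1}$ into the converse of Theorem \ref{submatriz}; both are valid.
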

\begin{proof}
If $\lambda$ is a complementarity eigenvalue of $A$, by Theorem \ref{submatriz} we have that $\lambda$ is as well an eigenvalue of $B$ (a principal submatrix of $A$) with an eigenvector $x>\textbf{o}$ associated.

Let us first consider the case in which $B$ is irreducible. From the fact that $x>\textbf{o}$ is an eigenvector of $B$ associated with $\lambda$, by virtue of the Perron-Frobenius Theorem we know that  $\lambda=\rho(B)$ with $B$ a principal irreducible submatrix of $A$.

Consider now the case where $B$ is reducible, without loss of generality due to our previous observation regarding conjugation by permutation matrices, we can assume that
\[B=
\left(
\begin{array}{cc}
X & Y   \\
\textbf{0} & Z
\end{array}
\right),\]
with $X \in \mathcal{M}_r(\R)$, $Y \in \mathcal{M}_{r \times s}(\R)$, and $Z \in \mathcal{M}_s(\R)$, all nonnegative, and  $Z$ irreducible or null. It is easy to see that $\lambda$ is an eigenvalue of $Z$ with an eigenvector $x'>\textbf{o}$ associated. If $Z$ is irreducible, as the previous case we have that $\lambda=\rho(Z)$ with $Z$ principal irreducible submatrix of $A$; while if $Z$ is null we have that $\lambda=0=\rho(Z)$ with $Z$ principal null submatrix if $A$.

Let $\rho(B)$ be the spectral radius of $B$ principal submatrix of $A$ irreducible or null. In case $B$ is irreducible, by Perron Frobenius Theorem there exists an eigenvector $x>\textbf{o}$ associated with $\rho(B)$ while if $B$ is null we know that $\textbf{1}>\textbf{o}$ is an eigenvector associated with $0=\rho(B)$. In both cases, due to Theorem \ref{submatriz}, we have that $\rho(B)$ belongs to $\Pi(A)$.
\end{proof}


\begin{cor}
\label{cor:Dt}
{Let $A\in \mathcal{M}_n(\R)$ be a non-negative matrix, then
$\Pi(A) =\Pi(A^T).$}
\end{cor}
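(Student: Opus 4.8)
The plan is to deduce this directly from Proposition \ref{prop:submatriz}, which expresses $\Pi(A)$ purely in terms of spectral radii of the principal submatrices of $A$ that are irreducible or null. The key point is that passing to the transpose induces a bijection on the set of index subsets $S \subseteq \{1,\dots,n\}$ that is compatible with all the relevant notions.

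First I would observe that for any $S \subseteq \{1,\dots,n\}$, the principal submatrix $A^T[S]$ obtained by keeping the rows and columns indexed by $S$ is exactly $(A[S])^T$; this is immediate from the definition of transpose and of principal submatrix. Next, I would note three stability facts: (i) $B$ is null if and only if $B^T$ is null; (ii) $B$ is irreducible if and only if $B^T$ is irreducible — indeed, reversing all arcs of a digraph preserves strong connectedness, and $B^T$ is the adjacency-type matrix of the digraph obtained from $B$ by reversing arcs (more directly, irreducibility is equivalent to the nonexistence of a nontrivial invariant coordinate subspace, a property symmetric under transpose); and (iii) $\rho(B) = \rho(B^T)$, since $B$ and $B^T$ have the same characteristic polynomial and hence the same eigenvalues.

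Combining these, the map $B = A[S] \mapsto B^T = A^T[S]$ is a bijection from $\{\text{principal submatrices of }A\text{ that are irreducible or null}\}$ onto $\{\text{principal submatrices of }A^T\text{ that are irreducible or null}\}$, and it preserves the spectral radius. Since $A$ is non-negative if and only if $A^T$ is, Proposition \ref{prop:submatriz} applies to both matrices, and we get
\[
\Pi(A) = \{\rho(B) : B \text{ principal submatrix of }A,\ \text{irreducible or null}\} = \{\rho(B^T) : \cdots\} = \Pi(A^T).
\]

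I do not anticipate a genuine obstacle here; the only point requiring a word of care is the claim that irreducibility is preserved under transpose, which I would justify either via the digraph reformulation (arc reversal preserves strong connectivity) or via the coordinate-subspace characterization of reducibility, whichever the paper has already set up. Everything else is bookkeeping.
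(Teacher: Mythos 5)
Your proposal is correct and follows essentially the same route as the paper's own proof: both apply Proposition \ref{prop:submatriz} and use the identities $(A_{JJ})^T=(A^T)_{JJ}$, the invariance of irreducibility and nullity under transposition, and $\rho(B)=\rho(B^T)$. Your write-up is simply a more detailed justification of the chain of set equalities the paper states.
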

\begin{proof}
{\[\Pi(A) =\{\rho(B):  B \text{ is a principal submatrix of $A$ irreducible or null}\}=\]
\[\{\rho(A_{JJ}):  J \subset \{1, \hdots, n\}, A_{JJ} \text{ irreducible or null }\}=\]
\[\{\rho((A_{JJ})^T):  J \subset \{1, \hdots, n\},  (A_{JJ})^T\text{ irreducible or null }\}=\]
\[\{\rho((A^T)_{JJ}):  J \subset \{1, \hdots, n\},   (A^T)_{JJ} \text{ irreducible or null }\}=\Pi(A^T).
\]}
\end{proof}

In \cite{Pinto2008}, the authors observe that, in general, the complementarity spectrum of a matrix may not coincide with the complementarity spectrum of its transpose. In this case, Corollary \ref{cor:Dt} shows that they do coincide for non-negative matrices. This observation will be usefull in the context of digraphs in this paper.

In the following section we will use these results for complementarity eigenvalues of matrices, in order to define the complementarity spectrum of a digraph, and study some properties regarding this spectrum.

\section{Complementarity eigenvalues of digraphs}
\label{sec:digraphs}

Let us first define the complementarity spectrum of a finite simple digraph, i.e., a digraph with no multiple arcs nor self-loops.

\begin{definition}
Let $D=(V,E)$ be a simple digraph and $A=A(D)$ its adjacency matrix. The complementarity spectrum of $D$, denoted as $\Pi(D)$, is the complementarity spectrum of its adjacency matrix $A$.
\end{definition}

Two digraphs are said to be \textit{complementarity cospectral} if they have the same complementarity spectrum.

Note that this spectrum is well defined since, as we noted above, the complementarity spectrum of a matrix is invariant in the family of adjacency matrices of $D$.

For the sake of simplicity, given a digraph $D$ with $n$ vertices, we will suppose that the set of vertices is $V=\{1, \hdots, n\}$. 

The following results lead to a simple and useful characterization of the complementarity eigenvalues for digraphs.

\begin{teo}
\label{compl_spect_induced_subdigraphs} Let $D$ be a digraph and $\Pi(D)$ its complementarity spectrum. Then
\[\Pi(D)=\{\rho(H): {H \text{ induced strongly connected subdigraph of }D}\}.\]
\end{teo}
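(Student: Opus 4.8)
The plan is to deduce this from Proposition \ref{prop:submatriz} by translating the linear-algebraic statement about principal submatrices of $A=A(D)$ into graph-theoretic language. The key dictionary is: a subset $J\subset\{1,\dots,n\}$ corresponds to the induced subdigraph $D[J]$, and the principal submatrix $A_{JJ}$ is exactly the adjacency matrix $A(D[J])$. Moreover, $A_{JJ}$ is irreducible if and only if $D[J]$ is strongly connected (as recalled in the preliminaries), and $A_{JJ}$ is the null matrix if and only if $D[J]$ has no arcs. So by Proposition \ref{prop:submatriz},
\[
\Pi(D)=\Pi(A)=\{\rho(A_{JJ}): J\subset\{1,\dots,n\},\ A_{JJ}\text{ irreducible or null}\}.
\]

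First I would handle the ``irreducible'' part: for such $J$, $A_{JJ}=A(D[J])$ with $D[J]$ strongly connected, so $\rho(A_{JJ})=\rho(D[J])$, and these contribute exactly $\{\rho(H): H\text{ induced strongly connected subdigraph of }D\}$ with $H$ having at least one arc (in particular every single vertex, viewed as a trivial strongly connected subdigraph, is already covered). Next I would handle the ``null'' part: if $A_{JJ}$ is the zero matrix then $\rho(A_{JJ})=0$; but $0$ is also the spectral radius of any single-vertex induced subdigraph (whose adjacency matrix is $[0]$, which is trivially strongly connected), and such a subdigraph always exists as long as $D$ is nonempty. Hence the null submatrices contribute nothing beyond the value $0$, which is already present among the strongly connected ones. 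This shows the two sets are equal.

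The one point requiring a little care — and the main (mild) obstacle — is the convention about whether a single vertex counts as a strongly connected subdigraph, and relatedly whether $0\in\Pi(D)$ always. One should note explicitly that a trivial digraph on one vertex is strongly connected by convention, so that $0$ is always a complementarity eigenvalue, matching the fact that $A$ always has a $1\times1$ null principal submatrix; this reconciles the ``or null'' clause in Proposition \ref{prop:submatriz} with the clean statement of the theorem. The remaining steps are the routine verifications of the dictionary claims (irreducibility $\Leftrightarrow$ strong connectivity of the induced subdigraph, and $\rho(A(H))=\rho(H)$ for strongly connected $H$, which is just the definition of $\rho(H)$), so the proof is essentially a bookkeeping argument on top of Proposition \ref{prop:submatriz}.
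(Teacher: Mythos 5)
Your proposal is correct and follows essentially the same route as the paper: both deduce the theorem from Proposition \ref{prop:submatriz} via the dictionary $A_{JJ}=A(D[J])$, irreducibility $\Leftrightarrow$ strong connectivity, and both dispose of the null-submatrix case by observing that its contribution $0$ is already realized by a single-vertex induced subdigraph, which is strongly connected by convention. No gaps; your extra remark on the single-vertex convention is exactly the point the paper's proof also makes explicit.
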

\begin{proof}
This follows from Proposition \ref{prop:submatriz}, and from the fact that $B=A_{JJ}=A(H)$, where $H$ is the digraph induced by the vertices in $J$. If $B$ is irreducible then $H$ is strongly connected. If $B$ is null then $H$ is composed of isolated vertices, in which case we have that $\rho(H)=\rho(H')$ where $H'$ the subdigraph generated by a single vertex which is strongly connected.
\end{proof}

By the previous characterization, we can see that the complementarity spectrum of a digraph is a nonnegative set, which always contain zero as an element.

This result extends the characterization of the complementarity spectrum for graphs, given in \cite{Fernandes2017}.
The complementarity spectrum can also be expressed in terms of the complementarity spectrum of its strongly connected components. Indeed, we have the following result.

\begin{prop}\label{prop:connect_components}
Let $D$ be a digraph and $D_1, \hdots, D_k$ the digraph generated by the strongly connected components of $D$. Then,
\[
\Pi(D)=\cup_{i=1}^k \Pi(D_i),
\]
\end{prop}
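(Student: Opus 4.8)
The plan is to deduce this directly from the structural characterization in Theorem \ref{compl_spect_induced_subdigraphs}, which says that $\Pi(D)$ is exactly the set of spectral radii of induced strongly connected subdigraphs of $D$. The whole argument will therefore reduce to a purely combinatorial observation: the family of induced strongly connected subdigraphs of $D$ is the disjoint union, over $i=1,\dots,k$, of the families of induced strongly connected subdigraphs of the $D_i$.

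First I would fix notation: let $V_1,\dots,V_k$ be the vertex sets of the strongly connected components of $D$, so that they partition $V$, and $D_i$ is the subdigraph of $D$ induced on $V_i$. The key step is to show that if $H$ is an induced strongly connected subdigraph of $D$ on a vertex set $W\subseteq V$, then $W\subseteq V_i$ for some single $i$. Indeed, for any two vertices $u,v\in W$ there is a directed $u\to v$ path and a directed $v\to u$ path inside $H$, hence inside $D$; therefore $u$ and $v$ lie in the same strongly connected component of $D$. Since $W$ is nonempty, all of its vertices lie in one $V_i$. Moreover, because $D_i$ is itself the subdigraph of $D$ induced on $V_i\supseteq W$, the subdigraph of $D$ induced on $W$ coincides with the subdigraph of $D_i$ induced on $W$; thus $H$ is an induced strongly connected subdigraph of $D_i$. (Here one should note, to cover the degenerate case handled in Theorem \ref{compl_spect_induced_subdigraphs}, that a single vertex is trivially strongly connected and lies in exactly one component, so isolated vertices and the value $0$ are accounted for.)

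Conversely, any induced strongly connected subdigraph $H$ of some $D_i$, say on vertex set $W\subseteq V_i$, is also the subdigraph of $D$ induced on $W$, since $D_i$ is induced in $D$; hence $H$ is an induced strongly connected subdigraph of $D$. Combining the two directions, the family of induced strongly connected subdigraphs of $D$ equals $\bigcup_{i=1}^k \{H : H \text{ induced strongly connected subdigraph of } D_i\}$. Applying Theorem \ref{compl_spect_induced_subdigraphs} to $D$ and to each $D_i$ and taking spectral radii then yields $\Pi(D)=\bigcup_{i=1}^k \{\rho(H)\} = \bigcup_{i=1}^k \Pi(D_i)$, as claimed.

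I do not expect a serious obstacle here; the only points requiring care are the observation that a strongly connected subdigraph cannot span two distinct components (so that each such subdigraph is localized to one $D_i$) and the routine but necessary check that an induced subdigraph of an induced subdigraph is again an induced subdigraph, so that "induced in $D_i$" and "induced in $D$" agree on vertex subsets of $V_i$.
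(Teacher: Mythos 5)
Your proof is correct and follows essentially the same route as the paper: both reduce the claim via Theorem \ref{compl_spect_induced_subdigraphs} to the observation that an induced strongly connected subdigraph of $D$ lives entirely inside a single strongly connected component, and hence the relevant families of subdigraphs coincide. You simply spell out in more detail the two points the paper leaves implicit (that such a subdigraph cannot span two components, and that ``induced in $D_i$'' agrees with ``induced in $D$'' on subsets of $V_i$), which is a reasonable elaboration rather than a different argument.
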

\begin{proof}
By Theorem \ref{compl_spect_induced_subdigraphs}, we have to prove that:
\[\{\rho(H): {H \text{ induced strongly connected subdigraph of }D}\}=\]
\[\cup_{i=1}^k \{\rho(H): {H \text{ induced strongly connected subdigraph of }D_i}\}\]

This is a consequence of the fact that $H$ is a strongly connected subdigraph of $D$ if and only if $H$ is a strongly connected subdigraph of $D_i$ for some $i$.
\end{proof}

{From Proposition~\ref{prop:connect_components}, it follows that, in general, there is no loss in generality in assuming that $D$ is strongly connected. Furthermore it is easy to construct pairs of digraphs, not strongly connected, with equal number of vertices and arcs having the same complementarity spectrum. Indeed, consider two different (non isomorphic) digraphs $D_1=(V_1, E_1)$ and $D_2=(V_2, E_2)$, and define $D$ and $H$ as follows.
\[D= D_1 \cup D_2 \cup \{e\} \qquad H= D_1 \cup D_2 \cup \{e'\},\]
with $e$ an edge from $V_1$ to $V_2$ and $e'$ an edge from $V_2$ to $V_1$.\\
It easy to see that the digraphs $D$ and $H$ are not isomorphic while $\Pi(D)= \Pi(D_1) \cup \Pi(D_2)=\Pi(H)$.}

We finish this section presenting an example. As mentioned above, it is well known that there exist non isomorphic cospectral digraphs. Let us take one example of such digraphs, and compute their complementarity spectra.

Consider the two strongly connected digraphs in Figure \ref{fig:coespectrales}, which are non isomorphic \cite{Harary1971}.

\begin{figure}[h!]
\centering
\includegraphics[scale=0.15]{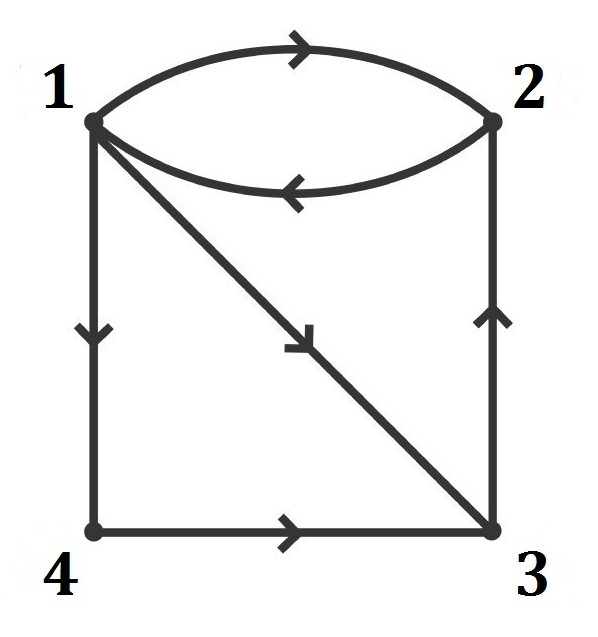} \hspace{1cm}
\includegraphics[scale=0.15]{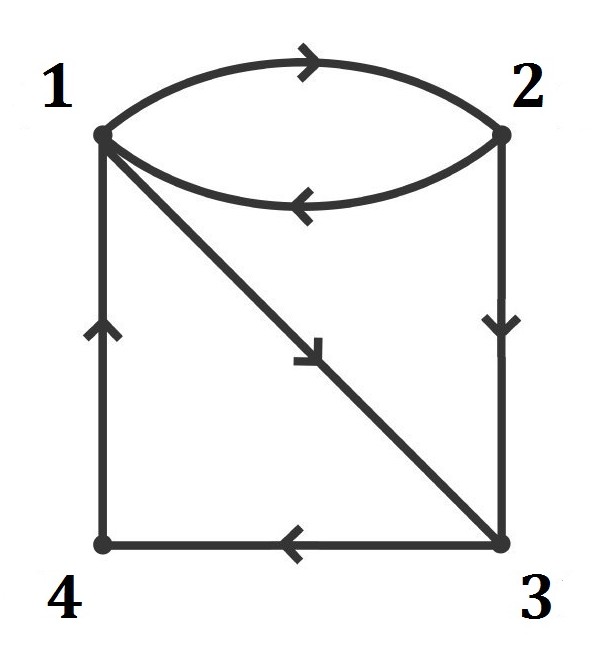}
\caption{Non isomorphic cospectral digraphs $D$ and $H$, which are not complementarity cospectral.}
\label{fig:coespectrales}
\end{figure}

{
The characteristic polynomial for both of them is $p(x) = x^4-x^2-x-1$. We observe that the digraph $D$ on the left, contains the following strongly connected induced subdigraphs: $D_0$, the subdigraph induced by the vertices $\{1,2\}$, $D_1$, induced by the vertices $\{1,2,3\}$, in addition to $D$ itself. Since $D_0$ is a cycle, its spectral radius is one. Moreover, as $D_0$ is a proper subdigraph of $D_1$ and $D_1$ a proper subdigraph  of $D$, it follows, by Lemma \ref{lem:sub}, that $1<\rho(D_0)<\rho(D)$, and hence the complementarity spectrum of $D$ is
\[\Pi(D)=\{0,1,\rho(D_0),\rho(D)\}.\]
}

On the other hand, the only strongly connected subdigraphs of the digraph on the right $H$, are $H$ itself, and  the cycles induced by vertices $\{1,2\}$ and $\{1,3,4\}$. Hence (invoking Lemma \ref{lem:sub}) its complementarity spectrum is
\[\Pi(H)=\{0,1,\rho(H)\}.\]

{Therefore, while the eigenvalues do not distinguish these two digraphs, the complementarity spectrum does. In the next section, we discuss the question of whether the complementarity spectrum distinguishes non isomorphic strongly connected digraphs.}

\section{Characterization of digraphs with at most two complementarity eigenvalues}
\label{sec:characterization}

In the previous section we have defined the complementarity spectrum of a digraph, and obtained a result that allows one to characterize it in terms of the spectral radius of  (strongly connected) induced subdigraphs.

In what follows we fully describe the digraphs with one and two complementarity eigenvalues.
{Even though the focus is on the analysis of strongly connected digraphs, our characterization includes general digraphs.}

First, we show that the existence of a cycle implies the existence of an \textit{induced} cycle. This is important because it allows one to use Theorem \ref{compl_spect_induced_subdigraphs}.

\begin{lema}
\label{digraph_induced_digraph}
Let $D$ be a digraph. If $D$ contains a cycle as a subdigraph, then $D$ contains a cycle as an induced subdigraph.
\end{lema}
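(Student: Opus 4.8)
The plan is to pick a cycle of \emph{minimum length} among all cycles of $D$ and show that such a shortest cycle is automatically induced. Since $D$ is finite it has only finitely many cycles, and by hypothesis at least one, so a shortest cycle $C\colon v_1\to v_2\to\cdots\to v_k\to v_1$ exists, with the vertices $v_1,\dots,v_k$ pairwise distinct. I would then claim that the subdigraph induced by $V(C)=\{v_1,\dots,v_k\}$ coincides with $C$; granting the claim, $D[V(C)]$ is an induced cycle and we are done.

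To prove the claim I would argue by contradiction: suppose $D[V(C)]$ contains an arc $(v_i,v_j)$ that is not one of the $k$ arcs of $C$. Since $D$ is a simple digraph it has no loops, so $j\neq i$; and since $(v_i,v_j)$ is not an arc of $C$, we have $j\not\equiv i+1 \pmod k$. Now follow this arc and then continue forward along $C$ from $v_j$ back to $v_i$, obtaining the closed walk $v_i\to v_j\to v_{j+1}\to\cdots\to v_i$. Because $v_1,\dots,v_k$ are pairwise distinct, the vertices traversed by this walk are distinct, so it is a genuine cycle $C'$ of $D$. Its length equals $1+\big((i-j)\bmod k\big)$, and since $j\not\equiv i \pmod k$ and $j\not\equiv i+1\pmod k$ this value lies in $\{2,3,\dots,k-1\}$; hence $C'$ is strictly shorter than $C$, contradicting the minimality of $C$. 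Therefore no such arc exists, $D[V(C)]=C$, and the claim follows.

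The argument is short, so there is no deep obstacle; the only points demanding care are the edge cases in the cyclic index arithmetic — ruling out $j\equiv i\pmod k$ via simplicity, noting that $j\equiv i+1\pmod k$ is precisely an arc of $C$ (and observing that when $k=2$ these two exclusions already force $D[V(C)]$ to have no extra arc) — and verifying that the walk $C'$ visits distinct vertices, which is immediate from the distinctness of the $v_i$. I expect the bookkeeping with indices modulo $k$ to be the most error-prone part, but nothing beyond routine verification is involved.
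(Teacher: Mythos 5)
Your proof is correct and rests on the same key observation as the paper's: a chord of a cycle produces a strictly shorter cycle. The paper phrases this as an induction on the cycle length while you phrase it as taking a cycle of minimum length, but these are logically equivalent formulations of one argument; your version simply spells out the index arithmetic that the paper leaves implicit.
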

\begin{proof} Let $\vec{C}$ be a cycle in $D$. We use induction on the length of $\vec{C}$.
If $length(\vec{C})=2$, since $D$ is simple, then $\vec{C}$ is generated by its vertices.
Let us suppose that the statement is true for $n=1, \hdots, k-1$, and that $length(\vec{C}) =k$. If $\vec{C}$ is not an induced subdigraph, then there exists a cycle of length $l$ in $D$, with $l<k$, and therefore, by the inductive hypothesis, there exists a cycle as an induced subdigraph of $D$.
\end{proof}

The following result connects the complementarity spectrum with the existence of cycles in a digraph.

\begin{prop}
\label{uno_comple_eigen}
Let $D$ be a digraph. Then $D$ contains a cycle if and only if $1$ is a complementarity eigenvalue.
\end{prop}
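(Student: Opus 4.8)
The plan is to prove both implications using the structural characterization of the complementarity spectrum from Theorem~\ref{compl_spect_induced_subdigraphs}, together with Lemma~\ref{digraph_induced_digraph} and the computation $\rho(\vec{C}_n)=1$ from the Example. For the forward direction, suppose $D$ contains a cycle as a subdigraph. By Lemma~\ref{digraph_induced_digraph}, $D$ then contains some cycle $\vec{C}_m$ as an \emph{induced} subdigraph, which is strongly connected. Applying Theorem~\ref{compl_spect_induced_subdigraphs}, $\rho(\vec{C}_m)\in\Pi(D)$, and since $\rho(\vec{C}_m)=1$ we conclude $1\in\Pi(D)$.

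For the converse, suppose $1\in\Pi(D)$. By Theorem~\ref{compl_spect_induced_subdigraphs}, there is an induced strongly connected subdigraph $H$ of $D$ with $\rho(H)=1$. The key point is that any strongly connected digraph with at least one arc contains a cycle as a subdigraph (following arcs from any vertex and using strong connectivity to close up a walk), so it suffices to rule out the case where $H$ has no arcs. If $H$ consisted of a single isolated vertex we would have $\rho(H)=0\neq 1$, a contradiction; hence $H$ has at least two vertices and, being strongly connected, at least one arc, and therefore contains a cycle. That cycle is a subdigraph of $H$, hence of $D$, completing the proof.

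The main obstacle — though a mild one — is making precise the claim that a strongly connected digraph with an arc must contain a (directed) cycle. One clean way is: pick any arc $(u,v)$; by strong connectivity there is a directed path from $v$ back to $u$; concatenating the arc $(u,v)$ with this path yields a closed directed walk through $u$, and any closed directed walk contains a directed cycle (take a shortest closed subwalk, or extract a cycle by the standard argument of deleting repeated vertices). One should also be careful that "cycle" here means a directed cycle of length at least $2$, which is guaranteed since $D$ is simple (no self-loops), so the arc $(u,v)$ together with a return path genuinely produces a cycle on at least two vertices.

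I would keep the write-up short, citing Theorem~\ref{compl_spect_induced_subdigraphs} and Lemma~\ref{digraph_induced_digraph} explicitly, invoking the Example for $\rho(\vec{C}_n)=1$, and relegating the "strongly connected $+$ an arc $\Rightarrow$ contains a cycle" fact to a one-sentence justification, since it is elementary and standard.
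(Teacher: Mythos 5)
Your proposal is correct and follows essentially the same route as the paper: the forward direction is identical (Lemma~\ref{digraph_induced_digraph} plus $\rho(\vec{C}_m)=1$), and your converse, though phrased directly rather than by contraposition, rests on the same underlying fact the paper uses, namely that a strongly connected digraph on more than one vertex contains a cycle while an isolated vertex has spectral radius $0$. No gaps; your extra care about extracting a directed cycle from a closed walk is fine but more detail than the paper deems necessary.
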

\begin{proof}
Suppose that $D$ contains a cycle. Then, due to Lemma \ref{digraph_induced_digraph}, there exists a cycle $\vec{C}$ as an induced subdigraph. Then $\rho(\vec{C})=1$ is a complementarity eigenvalue.

Conversely, if $D$ were acyclic, then the strongly connected  components of $D$ would be isolated vertices, and hence $\Pi(D)=\{0\}$, which is a contradiction.
\end{proof}

\begin{theorem}\label{tm:char} Let $D$ be a digraph and $\Pi(D)$ its complementarity spectrum. The three statements in $1$ are equivalent to each other, and the three statements in $2$ are equivalent to each other.
\begin{enumerate}
\item
\begin{enumerate}
    \item[(i)] $\Pi(D)=\{0\}$,
    \item[(ii)] $\#\Pi(D)=1$,
    \item[(iii)] $D$ is acyclic.
    \end{enumerate}
\item
    \begin{enumerate}
    \item[(i)] $\Pi(D)=\{0,1\}$,
    \item[(ii)] $\#\Pi(D)=2$,
    \item[(iii)] $D$ is not acyclic and its strongly connected components are either cycles or isolated vertices.
    \end{enumerate}
\end{enumerate}

\end{theorem}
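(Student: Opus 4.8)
The plan is to prove each of the two groups of three statements are mutually equivalent by establishing a cycle of implications within each group, leveraging the characterization $\Pi(D) = \{\rho(H) : H \text{ induced strongly connected subdigraph of } D\}$ from Theorem~\ref{compl_spect_induced_subdigraphs} together with Proposition~\ref{uno_comple_eigen}, Lemma~\ref{lem:sub}, and the example computation $\rho(\vec C_n) = 1$.

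For part 1, I would show (iii)$\Rightarrow$(i)$\Rightarrow$(ii)$\Rightarrow$(iii). If $D$ is acyclic, its only strongly connected components are isolated vertices, whose adjacency matrices are null, so by Proposition~\ref{prop:connect_components} (or directly from Theorem~\ref{compl_spect_induced_subdigraphs}) every induced strongly connected subdigraph has spectral radius $0$, giving $\Pi(D) = \{0\}$. The implication (i)$\Rightarrow$(ii) is immediate since $\#\{0\} = 1$. For (ii)$\Rightarrow$(iii): if $D$ were not acyclic it would contain a cycle, so by Proposition~\ref{uno_comple_eigen}, $1 \in \Pi(D)$; since $\Pi(D)$ always contains $0$ and $0 \neq 1$, this forces $\#\Pi(D) \geq 2$, a contradiction.

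For part 2, I would show (iii)$\Rightarrow$(i)$\Rightarrow$(ii)$\Rightarrow$(iii). Assume (iii): $D$ is not acyclic and each strongly connected component is a cycle or an isolated vertex. By Proposition~\ref{prop:connect_components}, $\Pi(D)$ is the union of the $\Pi(D_i)$ over the strongly connected components $D_i$; each such $\Pi(D_i)$ is $\{0,1\}$ if $D_i$ is a cycle (its only induced strongly connected subdigraphs being single vertices and the cycle itself, since a cycle $\vec C_m$ has no proper strongly connected induced subdigraph on $\geq 2$ vertices) and $\{0\}$ if $D_i$ is an isolated vertex. Since $D$ is not acyclic, at least one $D_i$ is a cycle, so $\Pi(D) = \{0,1\}$. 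Then (i)$\Rightarrow$(ii) is trivial. For (ii)$\Rightarrow$(iii): if $\#\Pi(D) = 2$, then $D$ is not acyclic by part 1, and I must rule out the possibility that some strongly connected component $D_i$ is neither a cycle nor an isolated vertex. If such a $D_i$ exists, it is a strongly connected digraph on $\geq 2$ vertices that is not a single cycle; I claim it then properly contains an induced cycle $\vec C$ (by Lemma~\ref{digraph_induced_digraph} it contains some induced cycle, and if that cycle spanned all of $D_i$ then $D_i$, being strongly connected, would have to equal that cycle, contradicting the assumption). Then $\vec C$ is a proper subdigraph of the strongly connected $D_i$, so by Lemma~\ref{lem:sub}, $1 = \rho(\vec C) < \rho(D_i)$; thus $\Pi(D)$ contains $0$, $1$, and $\rho(D_i)$, three distinct values, contradicting $\#\Pi(D) = 2$.

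The main obstacle is the last step of part 2, specifically the claim that a strongly connected digraph on at least two vertices that is not itself a single cycle must contain an induced cycle as a \emph{proper} subdigraph with spectral radius strictly less than its own. The subtlety is ensuring the induced cycle obtained from Lemma~\ref{digraph_induced_digraph} does not accidentally coincide with all of $D_i$; this is handled by observing that a strongly connected digraph equal to one of its induced cycles on its full vertex set must literally be that cycle. Once this structural point is nailed down, Lemma~\ref{lem:sub} delivers the required third complementarity eigenvalue and the contrapositive is complete.
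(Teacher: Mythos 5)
Your proposal is correct and follows essentially the same route as the paper: both hinge on Lemma \ref{digraph_induced_digraph} to extract an induced cycle and on Lemma \ref{lem:sub} to force a third complementarity eigenvalue whenever a strongly connected component strictly contains that cycle. The only cosmetic differences are the orientation of the implication cycles and that you carry the reduction to strongly connected components (via Proposition \ref{prop:connect_components}) explicitly through part 2, where the paper simply declares that generalization straightforward.
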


\begin{proof}
\begin{enumerate}
\item
The implication $(i) \Rightarrow (ii)$ is trivial. If $\#\Pi(D)=1$, since zero is always a complementarity eigenvalue, we have that $\Pi(D)=\{0\}$, and by Proposition \ref{uno_comple_eigen} we conclude that $D$ is acyclic. Hence $(ii) \Rightarrow (iii)$. To see that $(iii)\Rightarrow (i)$, we notice that if $D$ is acyclic, hence the digraphs generated  by the strongly connected components of $D$ are isolated vertices, and therefore $\Pi(D)=\{0\}$.

\item We will prove the equivalence for strongly connected digraphs. The generalization is straightforward by using Proposition \ref{prop:connect_components}.
The implication  $ (i) \Rightarrow (ii)$ is obvious. To see that
$(ii) \Rightarrow (iii)$, we observe that, since $D$ is strongly connected, it contains a cycle as a subdigraph. Now, by Lemma \ref{digraph_induced_digraph}, it contains a cycle $\vec{C}$ as an induced subdigraph. If $D$ were different from $\vec{C}$, then {Lemma \ref{lem:sub} implies that} its spectral radius would be strictly greater than one.  Thus, $\{0,1,\rho(D)\}\subset \Pi(D)$, which contradicts $\#\Pi(D)=2$. Therefore, $D=\vec{C}$. The implication $(iii)\Rightarrow (i)$ follows from the fact that $\Pi(\vec{C})=\{0, \rho(\vec{C})\}=\{0,1\}$.
\end{enumerate}
\end{proof}

We emphasize that these results relate the complementarity spectrum with the cyclic structure of the digraph. In a more general way, we see them as results showing that the complementarity spectrum determines structural properties of the digraph.

So far, we have classified the strongly connected digraphs with complementarity spectrum consisting in one or two elements. In the following section we will focus on strongly connected digraphs with three complementarity eigenvalues.

\section{Non isomorphic strongly connected digraphs sharing complementarity spectrum}
\label{sec:contraejemplos}

In what follows we show examples of non isomorphic strongly connected digraphs with the same complementarity eigenvalues. This answers negatively the question of whether the complementarity spectrum characterizes digraphs, among the family of strongly connected digraphs.

{We first exhibit two families of strongly connected digraphs that have a set of  parameters. By analysing different parameters for these two families, we are able to find non isomorphic strongly connected digraphs with different number of vertices sharing their complementarity spectra. Since it is usual to require equal number of vertices for spectral determination, these digraphs are still DCS.  In the second part of this section, we do find non isomorphic strongly connected digraphs having the same number of vertices, the same number of arcs, and sharing the complementarity cospectral, showing that the complementarity spectrum is not sufficient to distinguish digraphs, even among the strongly connected ones.}

The examples given below have three complementarity eigenvalues. The simplest examples of digraphs with $\#\Pi(D) = 3$ are the following two, which also appear in \cite{Lin2012} for instance.

We will represent digraphs with figures using the following convention: a single arrow between two vertices indicates one arc joining them, while a double arrow indicates that there may be other vertices in the path joining them.

The first family of digraphs we consider is the coalescence of two cycles, which we denote by ${\infty}(r,s)=\vec{C_r}\cdot\vec{C_s}$ or simply $\infty$, following the notation which appears in \cite{Lin2012}. We will denote $V(\vec{C}_r)=\{1, \hdots, r\}$ and $V(\vec{C}_s)=\{1', \hdots, s'\}$ identifying $1$ and $1'$ in ${\infty}(r,s)$.
\begin{figure}[h!]
\centering
\includegraphics[scale=0.2]{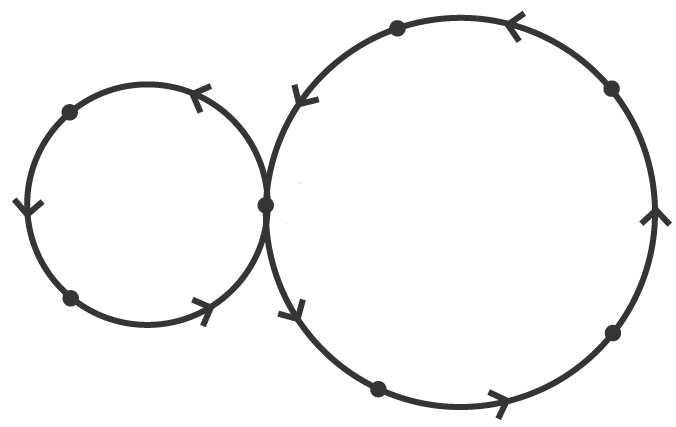}
\hspace{1.5cm}
\includegraphics[scale=0.175]{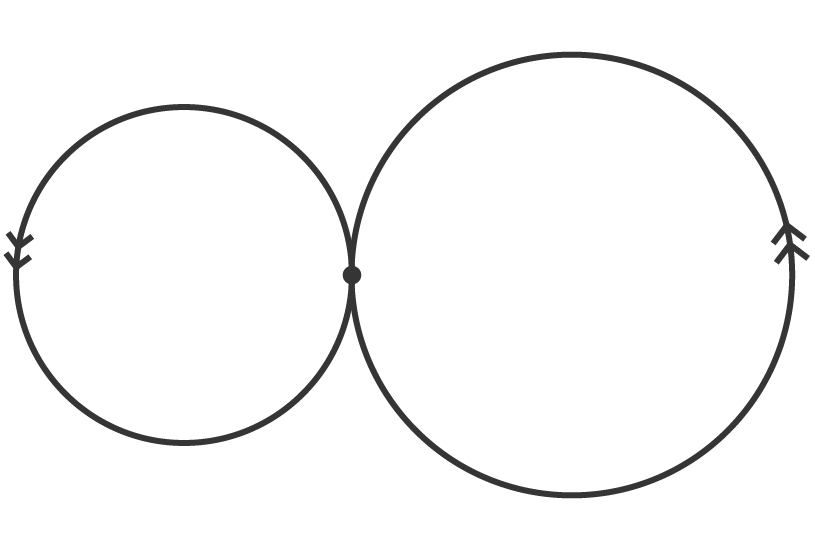}
\caption{Coalescence of two cycles: ${\infty}(3,5) = \vec{C_3}\cdot\vec{C_5}$, and schematic representation of the same digraph.}
\label{fig:ochoa}
\end{figure}

Since $\infty(r,s)$ and $\infty(s,r)$ are isomorphic we can assume $r\leq s$. Figure \ref{fig:ochoa} shows the digraph, and the above mentioned schematic representation.
It is easy to see that the only strongly connected induced subdigraphs are the cycles $\vec{C_r}$ and $\vec{C_s}$, in addition to the digraph ${\infty}$ itself and isolated vertices. Therefore, by Theorem \ref{compl_spect_induced_subdigraphs} and Lemma \ref{lem:sub}, the complementarity spectrum can be computed by means of the spectral radii of these induced subdigraphs and we have
\[\Pi({\infty})=\{0,1,\rho({\infty})\}.\]

The second family of digraphs considered is the $\theta$-digraph \cite{Lin2012} which consists of three directed paths $\vec{P}_{a+2}, \vec{P}_{b+2}, \vec{P}_{c+2}$ such that the initial vertex of $\vec{P}_{a+2}$ and $\vec{P}_{b+2}$ is the terminal vertex of $\vec{P}_{c+2}$, and the initial vertex of  $\vec{P}_{c+2}$ is the terminal vertex of $\vec{P}_{a+2}$ and $\vec{P}_{b+2}$, as shown in Figure \ref{fig:prohibido}. It will be denoted by $\theta(a, b, c)$ or simply by $\theta$.

\begin{figure}[h!]
\centering
\includegraphics[scale=0.15]{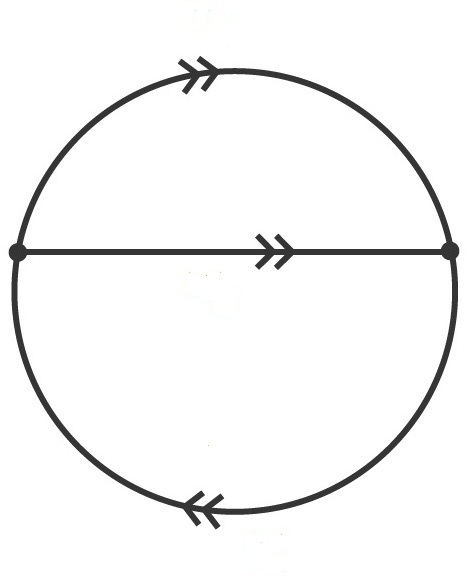}
\caption{Digraph ${\theta}(a,b,c)$.}
\label{fig:prohibido}
\end{figure}

Since ${\theta}(a,b,c)$ and ${\theta}(b,a,c)$ are isomorphic and we are not considering digraphs with multiple arcs, we can assume $a \leq b$ and $b > 0$.

We have that the number of vertices is $n=a+b+c+2$, and the only strongly connected induced subdigraphs are the cycles $\vec{C_r}$ and $\vec{C_s}$ (where $r=a+c+2$ and $s=b+c+2$), in addition to the digraph $\theta$ itself, and isolated vertices. Therefore, we have,
\[\Pi({\theta})=\{0,1,\rho({\theta})\}.\]

We may use Sachs' Theorem \ref{tm:sach} to compute the characteristic polynomial, which results in $P_{{\infty}}(x)=x^n-x^{n-r}-x^{n-s}$, where $n=r+s-1$ is the total number of vertices. Let us suppose, without loss of generality, that $r\leq s$, then,
\begin{equation}\label{eq:pinf}
 P_{{\infty}}(x)=x^n-x^{n-r}-x^{n-s}=x^{n-s}(x^s-x^{s-r}-1).
\end{equation}
By using Sachs' Theorem again, we obtain the characteristic polynomial of $P_{{\theta}}$, which is
\begin{equation}\label{eq:ptheta}
P_{{\theta}}(x) = x^n - x^{b} - x^{a}.
\end{equation}

We observe that these two digraphs just presented have $0$, $1$, in addition to a third complementarity eigenvalue, which is the spectral radius of the digraph or, equivalently, the largest root of their characteristic polynomials given by equations \eqref{eq:pinf} and \eqref{eq:ptheta}, respectively. {Since the polynomials are very similar, it is not difficult to obtain pairs of digraphs $ \infty$ and $\theta$ having the same largest root. For example, if we take $r=2, ~s=4,~b=2,~a=c=0$, we have $P_{{\theta}}(x) = x^4 - x^2 - 1$ and $P_{{\infty}}(x)=x^5-x^{3}-x=x(x^4-x^2-1)$, we see that complementarity spectrum is the same.}

{In fact, in what follows we will take advantage of this similarity to find infinitely many pairs of non isomorphic digraphs with the same complementarity eigenvalues.}

\begin{prop} \label{prop:cospec1} Let $2\leq r \leq s$, $b>0$, and $0\leq a,c$,  be integers. We have
\begin{enumerate}
  \item [$(a)$] For every $\theta(a,b,c)$ digraph there exists an $\infty(r,s)$ digraph such that $\Pi(\theta(a,b,c)) = \Pi(\infty(r,s))$.
  \item [$(b)$] For every $\infty(r,s)$ digraph there exist $r-1$ non isomorphic  $\theta(a,b,c)$ digraphs such that $\Pi(\infty(r,s))=\Pi(\theta(a,b,c))$.
\end{enumerate}
\end{prop}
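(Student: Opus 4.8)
The plan is to exploit the explicit factorizations of the characteristic polynomials. Recall from \eqref{eq:pinf} that, writing $n=r+s-1$ with $r\leq s$, the nonzero part of $P_{\infty(r,s)}$ is $q_{r,s}(x):=x^s-x^{s-r}-1$, and from \eqref{eq:ptheta} that $P_{\theta(a,b,c)}(x)=x^{a+b+c+2}-x^b-x^a = x^a(x^{b+c+2}-x^{b-a}-1)$ when $a\le b$. Since $\Pi(\infty(r,s))=\{0,1,\rho(\infty(r,s))\}$ and $\Pi(\theta(a,b,c))=\{0,1,\rho(\theta(a,b,c))\}$, and since in each case the third eigenvalue is the largest real root of the respective polynomial (equivalently, of the factor with the positive leading coefficient and negative constant term, which by Perron--Frobenius has a unique positive root), the equality of the two complementarity spectra is equivalent to the equality of these two largest roots. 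So the whole statement reduces to a bookkeeping question about when $x^s-x^{s-r}-1$ and $x^{b+c+2}-x^{b-a}-1$ share their (unique) positive root.

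For part $(a)$: given $\theta(a,b,c)$ with $a\le b$, $b>0$, $a,c\ge 0$, set $s:=b+c+2$ and $r:=s-(b-a)=a+c+2$. Then $2\le r\le s$ (the inequality $r\ge 2$ holds since $a,c\ge0$ and if $a=c=0$ we still get $r=2$; $r\le s$ holds since $b\ge a$), and $q_{r,s}(x)=x^s-x^{s-r}-1$ is \emph{identical} to the distinguished factor $x^{b+c+2}-x^{b-a}-1$ of $P_{\theta(a,b,c)}$. Hence $\rho(\infty(r,s))=\rho(\theta(a,b,c))$ and the complementarity spectra coincide. One should note the harmless edge case where $s-r=0$, i.e. $a=b$; here $q_{r,s}=x^s-2$, still with a unique positive root, and the argument goes through verbatim.

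For part $(b)$: given $\infty(r,s)$ with $2\le r\le s$, I want $r-1$ genuinely different triples $(a,b,c)$ reproducing the factor $x^s-x^{s-r}-1$. The idea is to run the map from part $(a)$ backwards with a free parameter. We need $b+c+2=s$ and $b-a=s-r$, i.e. $a=b-(s-r)$ and $c=s-2-b$. Let $b$ range over the integers with $s-r\le b\le s-2$; for each such $b$ this yields $a=b-(s-r)\ge 0$ and $c=s-2-b\ge 0$, with $a\le b$ automatically (as $s\ge r$) and $b>0$ (since $b\ge s-r\ge$ ... here one must check $b>0$: if $s-r=0$ and $b=0$ the triple degenerates, so restrict to $b\ge\max(1,s-r)$, losing at most this one value). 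Counting: $b$ runs over $\{s-r,\dots,s-2\}$, which has $r-1$ values, and the degenerate possibility is excluded exactly when it would have occurred, so one still obtains $r-1$ admissible triples. Distinct values of $b$ give non-isomorphic $\theta$-digraphs because $b$ is recoverable from the digraph (it is, up to the symmetry $a\leftrightarrow b$, the length data of the longer defining path, and one can pin it down from the multiset $\{a,b\}$ together with $c$, or simply from the pair of cycle lengths $\{r,s\}=\{a+c+2,b+c+2\}$ and the vertex count), so I would include a short lemma identifying the isomorphism invariants $\{a,b\},c$ of $\theta(a,b,c)$.

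The main obstacle is purely combinatorial: getting the inequalities and the off-by-one count exactly right at the boundary cases ($a=0$, $c=0$, $a=b$, $s-r=0$, and the degenerate triple $(0,0,s-2)$), and giving a clean argument that distinct admissible $b$'s produce non-isomorphic digraphs. There is no analytic difficulty — uniqueness of the positive root of $x^m-x^k-1$ (for $m>k\ge 0$) follows from Descartes' rule of signs or from monotonicity of $x^k(x^{m-k}-1)$ on $(1,\infty)$ together with $q(1)=-1<0$, and it coincides with $\rho$ by Perron--Frobenius applied to the strongly connected digraph.
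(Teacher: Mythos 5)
Your proposal is correct and takes essentially the same route as the paper: part $(a)$ is the identity $x^{1+c}P_{\theta(a,b,c)}(x)=P_{\infty(a+c+2,\,b+c+2)}(x)$ and part $(b)$ is the same family read backwards (your parameter $b$ corresponds to the paper's $i$ via $b=s-i-1$, $a=r-i-1$, $c=i-1$), with equality of complementarity spectra reduced in both cases to equality of the largest roots of the characteristic polynomials. The degenerate triple you flag when $r=s$ (forcing $b=0$) is a boundary case the paper's count of $r-1$ also glosses over, so it is not a gap relative to the paper's own argument; your extra lemma on isomorphism invariants is unnecessary since the $r-1$ digraphs already have pairwise distinct orders $n=b+r$.
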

\begin{proof} Item (a) follows from the polynomial equality
$x^{c+1}(x^{a+b+c+2} - x^b - x^a) =x^{a+b+2c+3} - x^{b+c+1} - x^{a+c+1}$ which may be seen as $x^{1+c}P_{\theta(a,b,c)}(x)= P_{\infty(a+c+2,b+c+2)}(x)$.\\
Item (b) follows from the fact that $x^{r+s-1} - x^{s-1} -x^{r-1} = x^i(x^{r+s-i-1} - x^{r-i-1})$, for any $i \in \{1,\ldots, r-1\}$. Now this can be seen as $P_{\infty(r,s)}(x)= x^i P_{\theta(r-i-1,s-i-1,i-1)}(x)$
  \end{proof}
\vspace{.5cm}

{By varying $c$ in Proposition \ref{prop:cospec1} (a) we see that there exist infinitely many $\theta(a,b,c)$  digraphs that have a non isomorphic complementarity cospectral mate $\infty$ digraph. Furthermore item (b) implies that, upon varying $r$, there exists infinitely many $\infty(r,s)$ digraphs having $r-1$  non isomorphic complementarity coespetral mates $\theta$-digraphs.  We notice that the digraphs in Proposition \ref{prop:cospec1} have different order, that is, have a distinct number of vertices.}

{A different infinite set of non isomorphic complementarity cospectral pairs of digraphs are obtained next.
\begin{prop}\label{prop:cospe2}
For any integer $r \geq 2$, $\infty(r,5r)$ and $\infty(2r,3r)$ have the same complementarity spectrum.
 \end{prop}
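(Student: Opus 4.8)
The plan is to compare the two coalescence digraphs through their complementarity spectra, which by the discussion above consist of exactly three elements: $0$, $1$, and the spectral radius of the digraph. Since $\Pi(\infty(r,5r)) = \{0,1,\rho(\infty(r,5r))\}$ and $\Pi(\infty(2r,3r)) = \{0,1,\rho(\infty(2r,3r))\}$, it suffices to show that these two spectral radii coincide. Equivalently, I want to show that the largest real root of $P_{\infty(r,5r)}$ equals the largest real root of $P_{\infty(2r,3r)}$.

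First I would write down both characteristic polynomials using equation \eqref{eq:pinf}. For $\infty(r,s)$ with $r \le s$ we have $P_{\infty(r,s)}(x) = x^{n-s}(x^s - x^{s-r} - 1)$, so the spectral radius is the largest root of the ``core'' polynomial $q_{r,s}(x) = x^s - x^{s-r} - 1$. Thus for the pair at hand I need the largest root of $q_1(x) = x^{5r} - x^{4r} - 1$ (from $\infty(r,5r)$, whose core uses $s = 5r$, $s - r = 4r$) to equal the largest root of $q_2(x) = x^{3r} - x^{r} - 1$ (from $\infty(2r,3r)$, whose core uses $s = 3r$, $s - r = r$). The key observation is the factorization identity $x^{5r} - x^{4r} - 1 = (x^{2r})^{?}\cdots$ — more precisely, substituting $y = x^{?}$ will not directly work, but note that if $\rho$ satisfies $\rho^{3r} - \rho^r - 1 = 0$, then multiplying by $\rho^{2r}$ gives $\rho^{5r} - \rho^{3r} - \rho^{2r} = 0$, which is not quite the other equation. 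Instead I would look for a substitution making the cores proportional: observe $x^{5r} - x^{4r} - 1$ and the reciprocal-type manipulations of $x^{3r} - x^r - 1$; concretely, if $t = x^r$ then $q_1$ becomes $t^5 - t^4 - 1$ and $q_2$ becomes $t^3 - t - 1$, and the identity $t^5 - t^4 - 1 = (t^3 - t - 1)(t^2 - t + 1)$ holds. Hence $q_1(x) = q_2(x)\cdot(x^{2r} - x^r + 1)$ as polynomials in $x$, and since $x^{2r} - x^r + 1 > 0$ for all real $x$ (its a positive-definite quadratic in $x^r$), the real roots of $q_1$ and $q_2$ are exactly the same; in particular their largest real roots agree.

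So the proof reduces to: (1) recalling $\Pi(\infty(r,s)) = \{0,1,\rho(\infty(r,s))\}$ and that $\rho(\infty(r,s))$ is the largest (real) root of $x^s - x^{s-r} - 1$; (2) verifying the polynomial identity $t^5 - t^4 - 1 = (t^3 - t - 1)(t^2 - t + 1)$ by direct expansion; (3) substituting $t = x^r$ to get $x^{5r} - x^{4r} - 1 = (x^{3r} - x^r - 1)(x^{2r} - x^r + 1)$; (4) noting the second factor is strictly positive on $\R$, so $x^{5r} - x^{4r} - 1$ and $x^{3r} - x^r - 1$ share the same largest real root, which is therefore the common value $\rho(\infty(r,5r)) = \rho(\infty(2r,3r))$; (5) concluding the two complementarity spectra are equal. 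I do not expect any serious obstacle here — the only thing to be careful about is making sure the ``core'' polynomial $x^s - x^{s-r} - 1$ indeed has a unique largest real root that equals the spectral radius (this follows from Perron–Frobenius applied to the strongly connected digraph $\infty(r,s)$, whose adjacency matrix is nonnegative irreducible, together with the factored form of $P_{\infty}$ in \eqref{eq:pinf}), and that the extra factor $x^{2r}-x^r+1$ contributes no real roots so it cannot introduce a spurious larger root. Everything else is the routine algebraic identity.
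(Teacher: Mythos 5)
Your proposal is correct and follows essentially the same route as the paper: both rest on factoring out the real-root-free polynomial $x^{2r}-x^r+1$ (the paper writes $P_{\infty(r,5r)}(x)=\frac{x^{2r}-x^r+1}{x^r}P_{\infty(2r,3r)}(x)$, which after clearing the powers of $x$ is exactly your identity $x^{5r}-x^{4r}-1=(x^{3r}-x^r-1)(x^{2r}-x^r+1)$) and then concluding the largest real roots, hence the spectral radii, coincide. Your substitution $t=x^r$ and the explicit check $t^5-t^4-1=(t^3-t-1)(t^2-t+1)$ is just a cleaner way of verifying the same factorization the paper asserts is ``easy to check.''
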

\begin{proof} It is easy to check that
\[P_{\infty(r,5r)}(x)=\frac{(x^{2r}-x^r+1)}{x^r}P_{\infty(2r,3r)}(x).\]
Since the polynomial $x^{2r}-x^r+1$ does not have real roots, we conclude that $\infty(r,5r)$ and $\infty(2r,3r)$ share the largest (real) root of their characteristic polynomial and hence their spectral radius.
\end{proof}
}

{We observe that, even though we have shown an abundant number of $\infty$ and $\theta$ digraphs with the same complementarity spectrum, they all have different orders. Since, as usual, it is required equal number of vertices for analysing spectral determination, these examples do not consist of digraphs that are not DCS.}

\subsection{Not all strongly connected digraphs are DCS}

A more challenging problem is to find two non isomorphic strongly connected digraphs with the same order (number of vertices) and size (number of arcs), and the same complementarity spectrum. {In what follows, we first give an example of such a pair of digraphs, and then we give two families, containing several pairs of non isomorphic digraphs with the same complementarity spectrum.}

\noindent {\textbf{A first example}}

Consider the two digraphs in Figure \ref{fig:ddt}, which are also taken from \cite{Harary1971} as those in Figure \ref{fig:coespectrales}\footnote{{Note that the digraph at the left is the same in both figures, but the digraph at the right is slightly different.}}. The digraph $D^T$ is obtained by reversing the arrows of digraph $D$, and therefore their adjacency matrices are transpose of each other. However, as observed in \cite{Harary1971}, these two digraphs are not isomorphic (or not self-converse). Since, from Corollary \ref{cor:Dt}, we have that the complementarity spectrum of a matrix and its transpose is the same, it follows that $\Pi(D) = \Pi(D^T)$.

{Observe that this fact is more general, and therefore a tool for generating examples of non-isomorphic digraphs sharing their complementarity spectrum, in the following sense.  Corollary \ref{cor:Dt} implies that if $D^T$ is the converse digraph of $D$, i.e., the digraph obtained by reversing all its arrows, then $\Pi(D) = \Pi(D^T)$. Hence, every pair of not self-converse digraphs is an example of non-isomorphic complementarity cospectral digraphs. }

\begin{figure}[h!]
\centering
\includegraphics[scale=0.15]{imagenes/D.jpg} \hspace{1cm}
\includegraphics[scale=0.15]{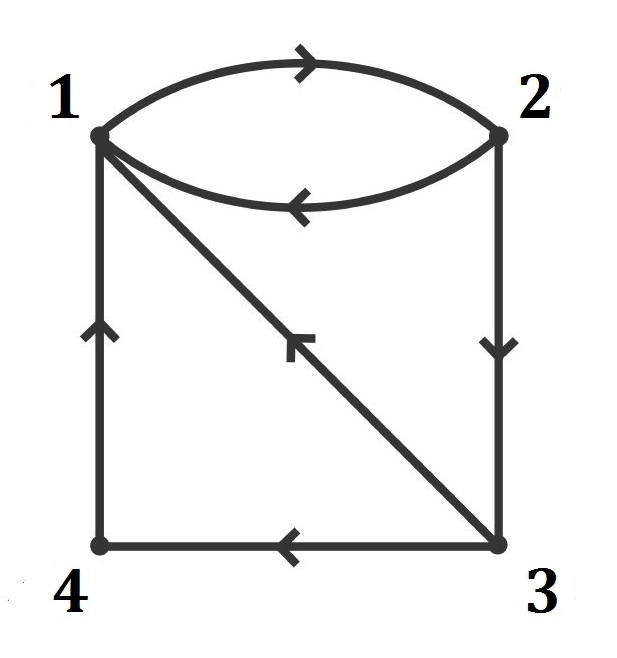}
\caption{Non isomorphic, complementarity cospectral digraphs $D$ and $D^T$.}
\label{fig:ddt}
\end{figure}

\noindent \textbf{Family 1}

Consider the digraph ${\infty}(r,s)$ described above, and let us add the arc $e=(r,2')$, connecting both cycles, as illustrated in figure.

\begin{figure}[h!]
\centering
\includegraphics[scale=0.13]{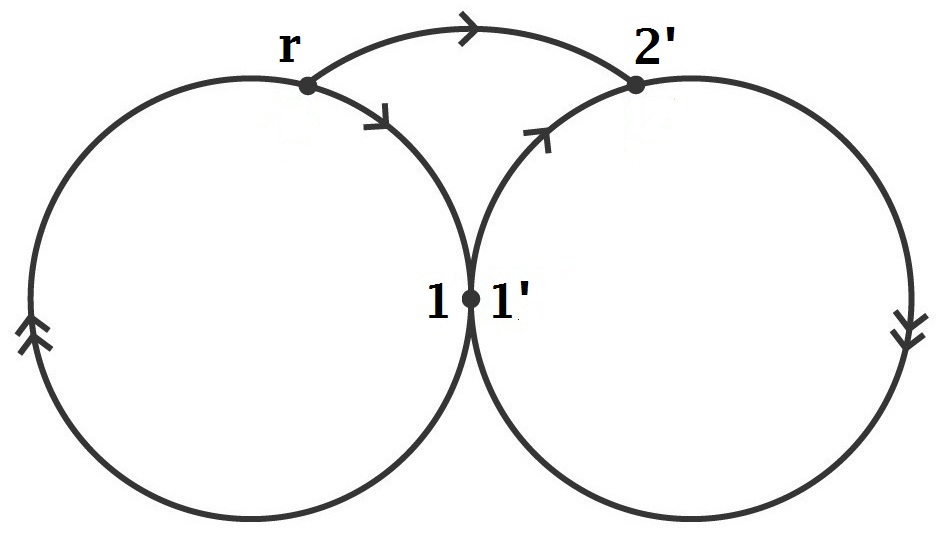}
\caption{$\widehat{\infty}(r,s)$ digraph}
\label{fig:tipo1}
\end{figure}

In the resulting digraph which will be denoted by ${\widehat{\infty}}(r,s)={\infty}(r,s)\cup\{e\} =\vec{C_r}\cdot\vec{C_s}\cup\{e\}$, the only strongly connected induced subdigraphs are the two cycles $\vec{C_r}$ and $\vec{C_s}$, in addition to the digraph ${\widehat{\infty}}(r,s)$ itself and isolated vertices. Then, we have
\[\Pi({\widehat{\infty}})=\{0,1,\rho({\widehat{\infty}})\}.\]

We can compute the characteristic polynomial using Sachs' Theorem \ref{tm:sach}, resulting in $P_{{\widehat{\infty}}}(x)=x^n-x^{n-r}-x^{n-s}-1$.

Observe that in this case, the added arc distinguishes both cycles, and therefore in general $\widehat{\infty}(r,s)$ and $\widehat{\infty}(s,r)$ are not isomorphic if $r\neq s$. In other words, adding an arc from the smaller cycle to the larger one it is not the same that adding an arc from the larger to the smaller. On the other hand, since the characteristic polynomial depends only on the lengths of the cycles, both digraphs have the same characteristic polynomial, and therefore the same spectral radius. We conclude that $\widehat{\infty}(r,s)$ and $\widehat{\infty}(s,r)$ have the same complementarity spectrum, while they are non isomorphic in general. We have proven the following result.
{
\begin{theorem}\label{tm:notdcs1} For any pair $(r,s)$, with $2\leq r < s$, the non-isomorphic digraphs $\widehat{\infty}(r,s)$ and $\widehat{\infty}(s,r)$ have the same order $n=r+s-1$, the same size $m=r+s+1$, and the same complementarity spectrum $\Pi({\widehat{\infty}})=\{0,1,\rho({\widehat{\infty}})\}$, where $\rho({\widehat{\infty}})$ is the largest root  of $x^n-x^{n-r}-x^{n-s}-1$.
\end{theorem}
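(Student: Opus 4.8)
The plan is to dispose of the three ``easy'' assertions quickly and to concentrate on the non-isomorphism, which carries the only real content.

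For order and size: $\widehat{\infty}(r,s)$ is built from $\vec{C_r}$ and $\vec{C_s}$ by identifying one vertex (hence $r+s-1$ vertices) and then adding one arc (hence $r+s+1$ arcs), and interchanging $r$ and $s$ affects neither total, so $\widehat{\infty}(r,s)$ and $\widehat{\infty}(s,r)$ share the order $n=r+s-1$ and the size $m=r+s+1$. For the complementarity spectrum I would invoke the paragraph preceding the statement: by Theorem~\ref{compl_spect_induced_subdigraphs} together with Lemma~\ref{lem:sub}, the only induced subdigraphs that contribute are $\vec{C_r}$, $\vec{C_s}$ and $\widehat{\infty}$ itself, so $\Pi(\widehat{\infty}(r,s))=\{0,1,\rho(\widehat{\infty}(r,s))\}$; and since $\widehat{\infty}(r,s)$ is strongly connected, the Perron-Frobenius Theorem makes $\rho(\widehat{\infty}(r,s))$ the largest real zero of $P_{\widehat{\infty}}(x)=x^{n}-x^{n-r}-x^{n-s}-1$. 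That polynomial is invariant under interchanging $r$ and $s$, so $\rho(\widehat{\infty}(r,s))=\rho(\widehat{\infty}(s,r))$, and the two complementarity spectra coincide.

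The hard part is to prove that $\widehat{\infty}(r,s)\not\cong\widehat{\infty}(s,r)$ when $r<s$, and I would do this by a rigidity argument pinning down any putative isomorphism. First I would record the in/out-degree pattern of $\widehat{\infty}(r,s)$: the coalescence vertex is the unique vertex of in-degree $2$ and out-degree $2$; the tail of the added arc $e$ is the unique vertex of in-degree $1$ and out-degree $2$; the head of $e$ is the unique vertex of in-degree $2$ and out-degree $1$; every other vertex has both degrees equal to $1$. Consequently any isomorphism $\phi\colon\widehat{\infty}(r,s)\to\widehat{\infty}(s,r)$ must send coalescence vertex to coalescence vertex and the unique type-$(1,2)$ vertex of the source to that of the target. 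I would then compute the directed distance between these two distinguished vertices: the type-$(1,2)$ vertex is the last vertex of the shorter cycle, whose only in-neighbour also lies on that cycle, and iterating this shows that every directed walk from the coalescence vertex to it ends with a full traversal of the shorter cycle; hence the distance equals $r-1$ in $\widehat{\infty}(r,s)$ and, by the identical argument, $s-1$ in $\widehat{\infty}(s,r)$. Since isomorphisms preserve directed distances, $\phi$ would force $r-1=s-1$, contradicting $r<s$; hence no such $\phi$ exists.

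I expect the only delicate point to be justifying that this directed distance is exactly ``length of the shorter cycle minus one'', i.e.\ that no shortcut exists; this reduces to the observation that each vertex of the shorter cycle other than the coalescence vertex has a single in-neighbour, itself on that cycle, which forces the value. The small case $r=2$ needs a brief check but goes through unchanged. Should this bookkeeping feel fragile, a clean alternative is to observe that $\widehat{\infty}(r,s)$ has a unique Hamiltonian cycle (every other cycle being $\vec{C_r}$ or $\vec{C_s}$, both shorter), so any isomorphism carries it to the unique Hamiltonian cycle of $\widehat{\infty}(s,r)$ and hence restricts to a rotation of the common cyclic labelling; since the degree pattern forces the coalescence vertex to be fixed, this rotation is trivial, and then the unique arc leaving the coalescence vertex off that Hamiltonian cycle reaches the vertex $r$ steps ahead in $\widehat{\infty}(r,s)$ but $s$ steps ahead in $\widehat{\infty}(s,r)$, impossible since $r<s$.
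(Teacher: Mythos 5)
Your proposal is correct, and on the order, size, and complementarity-spectrum claims it follows exactly the paper's route: identify the induced strongly connected subdigraphs as $\vec{C_r}$, $\vec{C_s}$, $\widehat{\infty}$ itself and isolated vertices, apply Theorem \ref{compl_spect_induced_subdigraphs} and Lemma \ref{lem:sub}, and note that $P_{\widehat{\infty}}(x)=x^n-x^{n-r}-x^{n-s}-1$ is symmetric in $r$ and $s$. Where you genuinely diverge is on the non-isomorphism: the paper disposes of this with the informal remark that ``the added arc distinguishes both cycles, and therefore in general $\widehat{\infty}(r,s)$ and $\widehat{\infty}(s,r)$ are not isomorphic if $r\neq s$,'' and offers no further argument. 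You supply an actual proof, by locating the unique vertex of degree type $(2,2)$ and the unique vertex of type $(1,2)$ and showing the directed distance between them is $r-1$ in one digraph and $s-1$ in the other (your backup argument via the unique Hamiltonian cycle is equally valid). This is a real improvement in rigor over the paper's treatment. One small wording slip: you say the type-$(1,2)$ vertex is ``the last vertex of the shorter cycle,'' but in $\widehat{\infty}(s,r)$ it is the last vertex of the \emph{longer} cycle; the uniform description is ``the last vertex of the cycle containing the tail of $e$,'' and your subsequent distance computation ($r-1$ versus $s-1$) already uses this correctly, so nothing breaks.
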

}

\noindent \textbf{Family 2}

Consider now the cycle $\vec{C_{n}}$, with $n\geq 4$. Now consider $j\geq 2$ and let $D(j)$ be the digraph obtained by adding the arcs $(2,1)$ and $(j+1,j)$ to it. A representation of this digraph can be seen in Figure \ref{fig:tipo4}.

\begin{figure}[h!]
\centering
\includegraphics[scale=0.15]{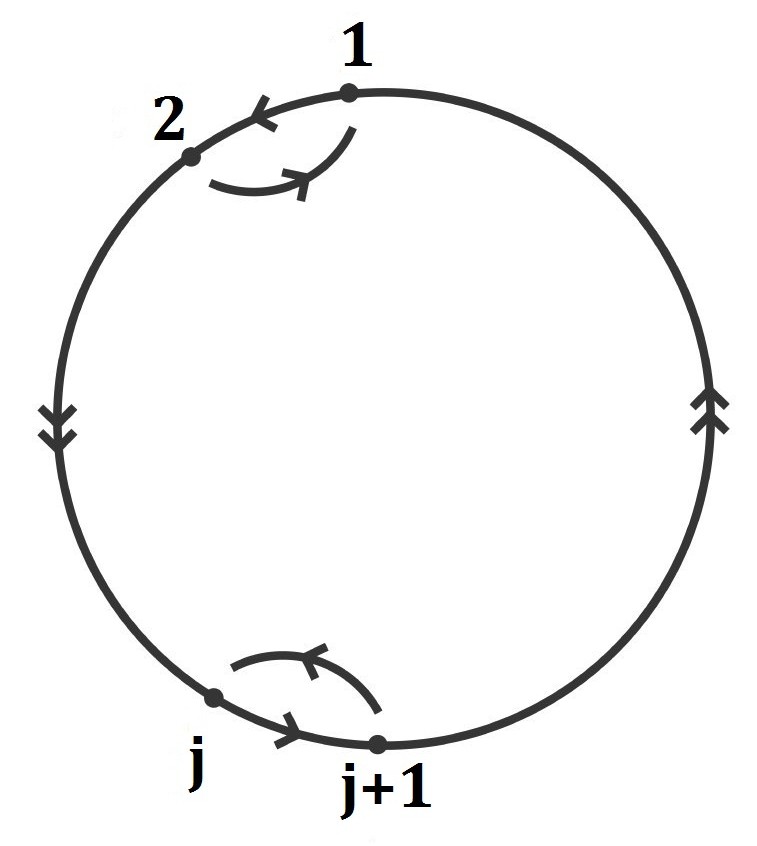}
\caption{Digraph $D(j)$}
\label{fig:tipo4}
\end{figure}

This digraph contains, as induced subdigraphs, the two cycles $\vec{C_2}$, and the digraph $D(j)$ itself, in addition to isolated vertices. Therefore the complementarity spectrum is
\[\Pi(D)=\{0,1,\rho(D)\}.\]

In order to compute the spectral radius, observe that this digraph contains as linear subdigraphs the two cycles $\vec{C_2}$, its direct sum, and the original $\vec{C_n}$. Therefore, using Sachs' Theorem \ref{tm:sach} we can compute the characteristic polynomial, which ends up being $P_{D(j)}(x) = x^n - 2x^{n-2} + x^{n-4} -1$. We notice that this polynomial, and in particular the spectral radius, does not depend on $j$, and then all these digraphs have the same complementarity spectrum. It is easy to see that varying $j$ we can obtain {sets} of non isomorphic digraphs, since for instance there are two induced directed paths $P_{j-1}$ and $P_{n-j+1}$, which we can use to distinguish the digraphs within the family. {We summarize our findings stating the following result.
\begin{theorem}\label{tm:notdcs2}
Let $n\geq 4$ be an integer. For all $ j \in  \{2,\ldots, n-1\}$, the digraghs $D(j)$ have the same order $n$, the same size $m=n+2$ and the same complementarity spectrum $\Pi(D)=\{0,1,\rho(D)\}$, where $\rho(D)$ is the largest root of $x^n - 2x^{n-2} + x^{n-4} -1$.
\end{theorem}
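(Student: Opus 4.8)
The plan is to follow the template already used above for the families $\infty$, $\theta$ and $\widehat{\infty}$: determine the strongly connected induced subdigraphs of $D(j)$, read off $\Pi(D(j))$ from Theorem~\ref{compl_spect_induced_subdigraphs}, and then use Sachs' Theorem~\ref{tm:sach} to show that the third complementarity eigenvalue does not depend on $j$. First I would dispose of the bookkeeping: the vertex set of $D(j)$ is $\{1,\dots,n\}$, so its order is $n$; the cycle $\vec{C}_n$ supplies the $n$ arcs $(i,i+1)$ (indices read modulo $n$), and the two arcs $(2,1)$ and $(j+1,j)$ that we add are distinct from one another and from every arc of $\vec{C}_n$, so the size is $n+2$. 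Since $D(j)$ contains $\vec{C}_n$ as a spanning subdigraph it is strongly connected, so $A(D(j))$ is irreducible and nonnegative; hence $\rho(D(j))$ is its Perron eigenvalue, equals the largest root of $P_{D(j)}$, and by Lemma~\ref{lem:sub} strictly exceeds $\rho(H)$ for every proper strongly connected subdigraph $H$ --- in particular $\rho(D(j))>1$.

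The heart of the proof is the claim that the only directed cycles of $D(j)$ are the two $2$-cycles on $\{1,2\}$ and on $\{j,j+1\}$ together with $\vec{C}_n$, and consequently that the only strongly connected induced subdigraphs of $D(j)$ are single vertices, those two $2$-cycles, and $D(j)$ itself. I would establish the claim about cycles by distinguishing which of the ``back arcs'' $(2,1)$, $(j+1,j)$ a cycle $\vec{C}$ uses: a cycle using neither consists only of forward arcs $(i,i+1)$ and is thus $\vec{C}_n$; a cycle through $(2,1)$ must leave vertex $1$ by its unique out-arc $(1,2)$, so, visiting no vertex twice, it is the $2$-cycle on $\{1,2\}$; and symmetrically a cycle through $(j+1,j)$ leaves $j$ only via $(j,j+1)$ and is the $2$-cycle on $\{j,j+1\}$ (this step uses that the two $2$-cycles are vertex-disjoint, so that vertex $j$ has out-degree one, which is the configuration drawn in Figure~\ref{fig:tipo4}); in particular no cycle uses both back arcs. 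Granting this, let $H\subsetneq D(j)$ be an induced strongly connected subdigraph with at least two vertices. Every vertex of $H$ then lies on a directed cycle of $H$, hence of $D(j)$; since $H\ne D(j)$ it cannot contain $\vec{C}_n$, so $H$ contains one of the two $2$-cycles. If $H$ contained some vertex $v$ not on that $2$-cycle, a cycle of $H$ through $v$ would have to be the other $2$-cycle, and then the strong connectivity of $H$ would force it to contain both the directed path $2\to 3\to\cdots\to j$ and the directed path $j+1\to\cdots\to n\to 1$, whence $H=D(j)$, a contradiction; so $H$ equals one of the two $2$-cycles.

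With this structure in hand, Theorem~\ref{compl_spect_induced_subdigraphs} yields $\Pi(D(j))=\{0,1,\rho(D(j))\}$ (three pairwise distinct values, since $\rho(D(j))>1$), coming respectively from single vertices, from the $2$-cycles, and from $D(j)$ itself. To see that $\rho(D(j))$ is the same for all $j$, I would compute $P_{D(j)}$ with Sachs' Theorem~\ref{tm:sach}: the linear subdigraphs of $D(j)$ are exactly the two $2$-cycles (each a linear digraph on two vertices with one component), their vertex-disjoint union (four vertices, two components), and $\vec{C}_n$ ($n$ vertices, one component), nothing else being possible since $\vec{C}_n$ uses every vertex and the two $2$-cycles are the only remaining cycles. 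Hence $a_2=-2$, $a_4=1$, $a_n=-1$, all other coefficients vanish, and $P_{D(j)}(x)=x^n-2x^{n-2}+x^{n-4}-1$, which is visibly independent of $j$; so $\rho(D(j))$ is the largest root of this polynomial for every $j$, and all the digraphs $D(j)$ share the complementarity spectrum $\{0,1,\rho(D)\}$, which proves the theorem. (For the distinctness of the $D(j)$ as digraphs, used in the discussion preceding the statement, one can invoke the induced directed paths $\vec{P}_{j-1}$ on $\{2,\dots,j\}$ and $\vec{P}_{n-j+1}$ on $\{j+1,\dots,n,1\}$, whose lengths vary with $j$.) The step I expect to require the most care is the structural claim of the second paragraph --- in particular, ruling out ``intermediate'' strongly connected induced subdigraphs --- and it is exactly there that one needs the two $2$-cycles to be vertex-disjoint, i.e., $3\le j\le n-1$.
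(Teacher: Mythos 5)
Your proposal follows exactly the paper's own route: identify the strongly connected induced subdigraphs (isolated vertices, the two $2$-cycles, and $D(j)$ itself), apply Theorem~\ref{compl_spect_induced_subdigraphs} to get $\Pi(D(j))=\{0,1,\rho(D(j))\}$, and use Sachs' Theorem~\ref{tm:sach} on the linear subdigraphs (the two $2$-cycles, their disjoint union, and $\vec{C}_n$) to obtain the $j$-independent polynomial $x^n-2x^{n-2}+x^{n-4}-1$. The paper presents this only as the informal discussion preceding the theorem; you supply the missing details, in particular the case analysis showing that the only cycles are the two $2$-cycles and $\vec{C}_n$ and that no intermediate strongly connected induced subdigraph exists.

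One point deserves emphasis: your closing caveat that the argument requires the two $2$-cycles to be vertex-disjoint, i.e.\ $3\le j\le n-1$, is not merely a precaution --- it exposes a genuine off-by-one in the stated range. For $j=2$ the added arcs are $(2,1)$ and $(3,2)$, the two $2$-cycles share vertex $2$, the induced subdigraph on $\{1,2,3\}$ is strongly connected with spectral radius $\sqrt{2}$ (so $\#\Pi(D(2))=4$), and the disjoint union of the two $2$-cycles is no longer a linear subdigraph, so Sachs gives $x^n-2x^{n-2}-1$ rather than the claimed polynomial. So your proof is correct precisely on the range you identify, and the theorem as printed should read $j\in\{3,\ldots,n-1\}$.
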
}

\section{Conclusions}\label{sec:final}

In this paper we have defined the complementarity spectrum for digraphs, and characterized it in terms of the spectral radii of its strongly connected induced subdigraphs.
{We characterized the complementarity spectrum of a digraph in terms of its structure, and in particular we give a complete description of digraphs with one or two complementarity eigenvalues. For instance, zero is a complementarity eigenvalue of every digraph, and the complementarity spectrum contains $1$ as an element if and only if it contains a cycle.}
We then addressed the question of whether the complementarity spectrum characterizes digraphs, i.e., if non isomorphic digraphs have different complementarity spectrum. In order to do this, we studied some examples of strongly connected digraphs with three complementarity eigenvalues. We showed some relations between the characteristic polynomials involved, and finally we presented families of non isomorphic digraphs of arbitrary size, sharing the complementarity spectrum, answering negatively the mentioned question. We conclude that the complementarity spectrum is not sufficient to distinguish digraphs.

It is worth pointing out that the cardinality of the complementarity spectrum of digraphs is a key parameter for understanding the structure of the digraph. We observe that given an undirected graph $G$ of order $n$ its  spectrum is composed by $n$ eigenvalues (counting multiplicities), whereas the complementarity spectrum is at least $n$, being equal $n$ only for the path, the cycle, the star and the complete graph. For a digraph $D$, the complementarity spectrum may be composed by a single element, independent of its order. We finalize this paper by suggesting a few research problems.

\begin{problem} {For any integer $j \geq 1$, find strongly connected digraphs of arbitrary order $n\geq j$ whose complementarity spectrum has cardinality $j$.}
\end{problem}
\begin{problem} Characterize the digraphs whose complementarity spectrum has exactly 3 elements.
\end{problem}
\begin{problem} Find a class of digraph that is DCS.
\end{problem}

\section*{Acknowledgments} V. Trevisan  acknowledges the financial support provided by the hosting University of Naples Federico II, and by CAPES-Print 88887.467572/2019-00, Brazil that provide the support for the visiting position. V. Trevisan also acknowledges partial support of CNPq grants 409746/2016-9 and 303334/2016-9, and FAPERGS
PqG 17/2551-0001. M. Fiori and D. Bravo acknowledge the financial support provided by ANII, Uruguay. F. Cubría thanks the doctoral scolarship from CAP-UdelaR.

\bibliographystyle{elsarticle-harv}
\bibliography{biblio}

\begin{thebibliography}{15}
\expandafter\ifx\csname natexlab\endcsname\relax\def\natexlab#1{#1}\fi
\providecommand{\url}[1]{\texttt{#1}}
\providecommand{\href}[2]{#2}
\providecommand{\path}[1]{#1}
\providecommand{\DOIprefix}{doi:}
\providecommand{\ArXivprefix}{arXiv:}
\providecommand{\URLprefix}{URL: }
\providecommand{\Pubmedprefix}{pmid:}
\providecommand{\doi}[1]{\href{http://dx.doi.org/#1}{\path{#1}}}
\providecommand{\Pubmed}[1]{\href{pmid:#1}{\path{#1}}}
\providecommand{\bibinfo}[2]{#2}
\ifx\xfnm\relax \def\xfnm[#1]{\unskip,\space#1}\fi
\bibitem[{Adly and Rammal(2015)}]{Adly2015}
\bibinfo{author}{Adly, S.}, \bibinfo{author}{Rammal, H.}, \bibinfo{year}{2015}.
\newblock \bibinfo{title}{A new method for solving second-order cone eigenvalue
  complementarity problems}.
\newblock \bibinfo{journal}{Journal of Optimization Theory and Applications}
  \bibinfo{volume}{165}, \bibinfo{pages}{563--585}.
\bibitem[{Brualdi(2010)}]{BRUALDI2010}
\bibinfo{author}{Brualdi, R.A.}, \bibinfo{year}{2010}.
\newblock \bibinfo{title}{Spectra of digraphs}.
\newblock \bibinfo{journal}{Linear Algebra and its Applications}
  \bibinfo{volume}{432}, \bibinfo{pages}{2181 -- 2213}.
\bibitem[{Pinto~da Costa et~al.(2004)Pinto~da Costa, Martins, Figueiredo and
  J{\'u}dice}]{Pinto2004}
\bibinfo{author}{Pinto~da Costa, A.}, \bibinfo{author}{Martins, J.},
  \bibinfo{author}{Figueiredo, I.}, \bibinfo{author}{J{\'u}dice, J.},
  \bibinfo{year}{2004}.
\newblock \bibinfo{title}{The directional instability problem in systems with
  frictional contacts}.
\newblock \bibinfo{journal}{Computer Methods in Applied Mechanics and
  Engineering} \bibinfo{volume}{193}, \bibinfo{pages}{357--384}.
\bibitem[{Pinto~da Costa and Seeger(2010)}]{Pinto2008}
\bibinfo{author}{Pinto~da Costa, A.}, \bibinfo{author}{Seeger, A.},
  \bibinfo{year}{2010}.
\newblock \bibinfo{title}{Cone-constrained eigenvalue problems: theory and
  algorithms}.
\newblock \bibinfo{journal}{Computational Optimization and Applications}
  \bibinfo{volume}{45}, \bibinfo{pages}{25--57}.
\bibitem[{Cvetkovi{\'c} et~al.(1998)Cvetkovi{\'c}, Doob and
  Sachs}]{Cvetkovic1998}
\bibinfo{author}{Cvetkovi{\'c}, D.}, \bibinfo{author}{Doob, M.},
  \bibinfo{author}{Sachs, H.}, \bibinfo{year}{1998}.
\newblock \bibinfo{title}{Spectra of Graphs: Theory and Applications}.
\newblock \bibinfo{publisher}{Wiley, New York}.
\bibitem[{R.~van Dam and Haemers(2003)}]{Haemers}
\bibinfo{author}{R.~van Dam, E.}, \bibinfo{author}{Haemers, W.H.},
  \bibinfo{year}{2003}.
\newblock \bibinfo{title}{Which graphs are determined by their spectrum?}
\newblock \bibinfo{journal}{Linear Algebra and its Applications}
  \bibinfo{volume}{373}, \bibinfo{pages}{241 -- 272}.
\bibitem[{Facchinei and Pang(2007)}]{Facchinei2007}
\bibinfo{author}{Facchinei, F.}, \bibinfo{author}{Pang, J.S.},
  \bibinfo{year}{2007}.
\newblock \bibinfo{title}{Finite-dimensional variational inequalities and
  complementarity problems}.
\newblock \bibinfo{publisher}{Springer Science \& Business Media}.
\bibitem[{Fernandes et~al.(2017)Fernandes, Judice and Trevisan}]{Fernandes2017}
\bibinfo{author}{Fernandes, R.}, \bibinfo{author}{Judice, J.},
  \bibinfo{author}{Trevisan, V.}, \bibinfo{year}{2017}.
\newblock \bibinfo{title}{Complementary eigenvalues of graphs}.
\newblock \bibinfo{journal}{Linear Algebra and its Applications}
  \bibinfo{volume}{527}, \bibinfo{pages}{216--231}.
\bibitem[{Harary et~al.(1971)Harary, King, Mowshowitz and Read}]{Harary1971}
\bibinfo{author}{Harary, F.}, \bibinfo{author}{King, C.},
  \bibinfo{author}{Mowshowitz, A.}, \bibinfo{author}{Read, R.C.},
  \bibinfo{year}{1971}.
\newblock \bibinfo{title}{Cospectral graphs and digraphs}.
\newblock \bibinfo{journal}{Bulletin of the London Mathematical Society} ,
  \bibinfo{pages}{321--328}.
\bibitem[{Lin and Shu(2012)}]{Lin2012}
\bibinfo{author}{Lin, H.}, \bibinfo{author}{Shu, J.}, \bibinfo{year}{2012}.
\newblock \bibinfo{title}{A note on the spectral characterization of strongly
  connected bicyclic digraphs}.
\newblock \bibinfo{journal}{Linear algebra and its applications}
  \bibinfo{volume}{436}, \bibinfo{pages}{2524--2530}.
\bibitem[{Pinheiro et~al.(2020)Pinheiro, Souza and Trevisan}]{Pinheiro2020}
\bibinfo{author}{Pinheiro, L.K.}, \bibinfo{author}{Souza, B.S.},
  \bibinfo{author}{Trevisan, V.}, \bibinfo{year}{2020}.
\newblock \bibinfo{title}{Determining graphs by the complementary spectrum}.
\newblock \bibinfo{journal}{Discussiones Mathematicae Graph Theory}
  \bibinfo{volume}{40}, \bibinfo{pages}{607--620}.
\bibitem[{Sachs(1964)}]{Sachs1964}
\bibinfo{author}{Sachs, H.}, \bibinfo{year}{1964}.
\newblock \bibinfo{title}{Beziehungen zwischen den in einem graphen enthaltenen
  kreisen und seinem charakteristischen polynom}.
\newblock \bibinfo{journal}{Publ. Math. Debrecen} \bibinfo{volume}{11},
  \bibinfo{pages}{119--134}.
\bibitem[{Seeger(1999)}]{Seeger99}
\bibinfo{author}{Seeger, A.}, \bibinfo{year}{1999}.
\newblock \bibinfo{title}{Eigenvalue analysis of equilibrium processes defined
  by linear complementarity conditions}.
\newblock \bibinfo{journal}{Linear Algebra and its Applications}
  \bibinfo{volume}{292}, \bibinfo{pages}{1--14}.
\bibitem[{Seeger(2018)}]{Seeger2018}
\bibinfo{author}{Seeger, A.}, \bibinfo{year}{2018}.
\newblock \bibinfo{title}{Complementarity eigenvalue analysis of connected
  graphs}.
\newblock \bibinfo{journal}{Linear Algebra and its Applications}
  \bibinfo{volume}{543}, \bibinfo{pages}{205--225}.
\bibitem[{Von~Collatz and Sinogowitz(1957)}]{Collatz1957}
\bibinfo{author}{Von~Collatz, L.}, \bibinfo{author}{Sinogowitz, U.},
  \bibinfo{year}{1957}.
\newblock \bibinfo{title}{Spektren endlicher grafen}.
\newblock \bibinfo{journal}{Abhandlungen aus dem Mathematischen Seminar der
  Universit{\"a}t Hamburg} \bibinfo{volume}{21}, \bibinfo{pages}{63--77}.

\end{thebibliography}





\end{document}